\documentclass[11pt]{article}
\usepackage[a4paper,margin=30mm]{geometry}
\usepackage{amsmath,amssymb,amsthm,mathtools}
\usepackage{microtype}
\usepackage{enumitem}
\usepackage{hyperref}
\usepackage[nameinlink,noabbrev]{cleveref}
\usepackage[numbers,sort&compress]{natbib}
\usepackage{booktabs}
\usepackage[T1]{fontenc}
\usepackage{lmodern}
\hypersetup{colorlinks=true,linkcolor=blue,citecolor=blue,urlcolor=blue}
\newtheorem{theorem}{Theorem}[section]
\newtheorem{proposition}[theorem]{Proposition}
\newtheorem{lemma}[theorem]{Lemma}
\newtheorem{corollary}[theorem]{Corollary}
\theoremstyle{remark}
\newtheorem{remark}[theorem]{Remark}

\newcommand{\R}{\mathbb R}
\newcommand{\E}{\mathbb E}
\newcommand{\Pp}{\mathbb P}
\newcommand{\1}{\mathbf 1}
\newcommand{\ns}{\mathrm{ns}}
\newcommand{\loc}{\mathrm{loc}}

\title{Initialization-Dependent Solution Selection by the Euler--Maruyama Scheme for a Degenerate SDE}
\author{Akihiro Tanaka\thanks{Sumitomo Mitsui Banking Corporation, 1-1-2 Marunouchi, Chiyoda-ku, Tokyo 100-0005, Japan. Email: \texttt{tnkaki2000@gmail.com}.}}
\date{October 2026}

\begin{document}
\maketitle

\begin{abstract}
We study the Euler--Maruyama approximation of the degenerate stochastic differential equation
\[
 dX_t=|X_t|^\alpha\,dW_t,\qquad 0<\alpha<\frac12,\qquad X_0=0,
\]
for which uniqueness in law fails.  The Euler scheme started exactly from zero is trapped by the absorbing solution, whereas a nonzero initialization $x_h\to0$ can select a different weak solution.  We identify the initialization-dependent limiting regimes.  The critical spatial scale is
\[
 r_h=h^{1/[2(1-\alpha)]},
\]
and the relevant depth parameter is
\[
 \Theta_h=h\log\left(1+\log^+\frac{r_h}{|x_h|}\right).
\]
If $\Theta_h\to0$, the scheme converges to the non-sticky weak solution.  If $\Theta_h\to\theta\in(0,\infty)$, it converges to the solution that stays at zero for the deterministic time $\theta/|\log\alpha|$ and then evolves as the non-sticky solution.  If $\Theta_h\to\infty$, the scheme converges to the absorbing solution.  The proof is based on an exact scale-invariant Euler chain, a logarithmic exit-time law of large numbers, a discrete Tanaka occupation estimate, and a compact-uniform restart argument.  Every finite deterministic delay can be obtained by a suitable vanishing initialization, while oscillating depth parameters yield process-level nonconvergence.  Thus a vanishing initialization can survive in the limit as a macroscopic deterministic delay and thereby act as a solution selector for the Euler--Maruyama approximation.
\end{abstract}

\noindent\textbf{Keywords.} Euler--Maruyama scheme; degenerate stochastic differential equation; weak non-uniqueness; solution selection; non-sticky solution; weak convergence.

\noindent\textbf{MSC2020.} Primary 65C30; Secondary 60H35.

\section{Introduction}
Consider
\begin{equation}\label{eq:sde}
 dX_t=|X_t|^\alpha\,dW_t,\qquad 0<\alpha<\frac12,\qquad X_0=0.
\end{equation}
This equation is a classical one-dimensional example in which degeneracy of the diffusion coefficient destroys uniqueness in law.  Absolute-power coefficients of this form are not confined to abstract examples of non-uniqueness.  They have also appeared in free-boundary extensions of CEV/SABR-type models introduced to allow a modeled state variable to cross zero, notably in the negative-interest-rate literature; see, for example, \citet{AntonovKonikovSpector2015} and the application to Japanese interest-rate options in \citet{SuganumaYamada2017}.  Those models contain additional components, such as stochastic volatility, and are not covered by the results of the present paper.  The scalar equation \eqref{eq:sde} nevertheless isolates a basic issue that is independent of that particular application: once uniqueness in law fails at the degenerate point, specifying a continuous-time model and showing that a direct time discretization selects that model are distinct questions.  In particular, the existence of a prescribed free-boundary law does not by itself identify the vanishing-step limit of an Euler approximation of the underlying non-unique SDE.

More generally, \citet{EngelbertSchmidt1985} gave a description of weak solutions of driftless one-dimensional stochastic differential equations in terms of a fundamental solution, which has no sojourn time in the zero set of the diffusion coefficient, and time delays in that zero set.  Thus the existence of delayed weak solutions is part of the classical one-dimensional theory.  For the power-law coefficient in \eqref{eq:sde}, \citet{BassBurdzyChen2007} showed that weak uniqueness fails, whereas pathwise uniqueness holds within the class of solutions that spend zero Lebesgue time at the origin; in that class a strong solution exists.

Two basic weak solutions are useful to distinguish at the outset.  The identically zero process is an absorbing solution.  There is also a distinguished non-sticky solution $X^{\ns}$ satisfying
\[
 \int_0^T\1_{\{X_t^{\ns}=0\}}\,dt=0\quad\text{a.s.},\qquad T<\infty.
\]
Moreover, for every deterministic $d\ge0$, one may keep the process at zero up to time $d$ and then start an independent copy of the non-sticky solution.  These delayed solutions are weak solutions of the same SDE.  We emphasize that this deterministic initial holding is not the same object as a classical sticky diffusion, where positive occupation at a boundary is typically tied to local time or a speed-measure atom.  Sticky Brownian motion and related SDE representations have been studied, for example, by \citet{Bass2014} and \citet{EngelbertPeskir2014}; related time-change descriptions include \citet{Peskir2022} and \citet{RamirezUribe2024}.  Numerical approximation of prescribed sticky boundary dynamics is considered by \citet{BouRabeeHolmesCerfon2020} and, more recently, by \citet{Sharma2025}.  Our question is different: the limiting SDE itself does not specify a unique weak law, and we ask which law is selected by the ordinary Euler--Maruyama scheme.

Let $h>0$ and let $x_h\ne0$ with $x_h\to0$.  We consider
\begin{equation}\label{eq:em}
 X_t^h=x_h+\int_0^t|X_{\eta_h(s)}^h|^\alpha\,dW_s,
 \qquad \eta_h(s)=\lfloor s/h\rfloor h.
\end{equation}
If $x_h=0$, the scheme remains identically zero.  It is therefore natural to perturb the initialization.  The main point of this paper is that the statement ``$x_h\to0$'' is far from sufficient to determine the numerical limit.

The first relevant scale follows from balancing one Euler increment with the current position.  If $|x|$ is of order $r_h$, then
\[
 |x|^\alpha\sqrt h
\]
is of the same order as $|x|$ precisely when
\begin{equation}\label{eq:rh}
 r_h=h^{1/[2(1-\alpha)]},\qquad r_h^\alpha\sqrt h=r_h.
\end{equation}
At the grid points $Y_k^h=X_{kh}^h$, the rescaled process
\[
 V_k^h=Y_k^h/r_h
\]
satisfies the exact recursion
\begin{equation}\label{eq:rescaledchainintro}
 V_{k+1}^h=V_k^h+|V_k^h|^\alpha Z_{k+1},
\end{equation}
which no longer contains $h$.  The numerical selection problem is therefore reduced to the exit behavior of a single scale-invariant Markov chain started from $v_h=x_h/r_h$.
This exact reduction is special to the power-law coefficient and is deliberately exploited throughout the paper; we do not claim here a corresponding classification for general degenerate diffusion coefficients.

The chain has a logarithmic depth.  We use
\begin{equation}\label{eq:Hintro}
 H(v)=\log\left(1+\log^+\frac1{|v|}\right),\qquad v\ne0.
\end{equation}
For very small $v$, one step transforms $L=\log(1/|v|)$ approximately into $\alpha L$; consequently $H$ decreases by approximately $|\log\alpha|$ per step.  This suggests the physical exit time
\[
 h\tau_R(v_h)\approx \frac{hH(v_h)}{|\log\alpha|}.
\]
This leads to the selection parameter
\begin{equation}\label{eq:ThetaIntro}
 \Theta_h=hH(x_h/r_h)
 =h\log\left(1+\log^+\frac{r_h}{|x_h|}\right).
\end{equation}
Our main result is the following trichotomy.  If $\Theta_h\to0$, the microscopic trapping period disappears and the Euler approximation selects $X^{\ns}$.  If $\Theta_h\to\theta\in(0,\infty)$, the trapping period survives as the deterministic delay
\[
 d_\theta=\frac{\theta}{|\log\alpha|}.
\]
If $\Theta_h\to\infty$, the scheme remains close to zero on every compact time interval.  In particular, polynomial initializations are always subcritical.  A macroscopic delay requires a double-exponential initialization; for $d>0$, one may take
\[
 x_h=\exp\{-\exp(d|\log\alpha|/h)\}.
\]
Thus an initialization that vanishes faster than every algebraic scale can leave a finite temporal trace in the limit.

From a numerical point of view, the classification separates the mathematical selection mechanism from what is visible in a fixed floating-point format. Every initialization with
\[
  \log\log(1/|x_h|)=o(1/h)
\]
is subcritical and selects the non-sticky law. This class contains, in particular, all polynomial and algebraic initializations. By contrast, a finite macroscopic delay requires a double-exponential depth, which no fixed finite-precision format can represent for small $h$. For instance, in IEEE double precision the smallest positive (subnormal) number is approximately $4.94\times10^{-324}$. Hence, for $0<h\le1$ (so that $r_h\le1$), every representable $x_h\ne0$ satisfies
\[
  \log\frac{r_h}{|x_h|}\le\log\frac{1}{|x_h|}\le 744.5,
  \qquad\text{so}\qquad
  \Theta_h\le h\log(745.5)\le 6.62\,h .
\]
Thus the effective logarithmic depth $H(x_h/r_h)=\Theta_h/h$ remains uniformly bounded in fixed double precision. The theorem should therefore not be read as predicting several competing limits for an ordinary fixed-precision implementation. Rather, it identifies the exact mathematical boundary between initialization regimes. In particular, it explains why conventional nonzero perturbations select the non-sticky solution. The critical regimes can still be investigated without underflow by evolving the logarithmic depth variable in the exact representation of Appendix~\ref{app:exit}.

The occupation estimate is the second main ingredient.  In earlier work \citep{TaguchiTanaka2019}, we studied weak convergence of Euler--Maruyama schemes for degenerate SDEs under a non-sticky condition.  In the published version, Theorem 9.1 states weak convergence for the Euler scheme defined by (9.4), initialized at arbitrary $x_n\notin Z(\sigma)$ with $x_n\to x_0$, without a quantitative restriction on the rate of approach.  The proof of the key occupation estimate in Lemma 9.3, estimate (9.9), attempts to use the continuous occupation formula with the lagged Euler state $X_{\eta_h(s)}^h$.  For the present power-law model, the initialization-dependent limits show that this unrestricted initialization-independent conclusion cannot hold.  Here we replace that step by a discrete Tanaka argument applied directly to the Euler chain.  The resulting estimate is
\[
 \limsup_{h\downarrow0}\E\int_0^T\1_{\{|X_t^h|<\varepsilon\}}\,dt
 \le C_T\varepsilon^{1-2\alpha}
\]
in the subcritical regime.

The argument also yields a process-level cluster-set description for deterministic vanishing initializations.  In particular, every finite deterministic delay occurs as an Euler limit, and an oscillating selection parameter can make the numerical laws fail to converge even though $x_h\to0$.  Thus the contribution is not a construction of delayed weak solutions themselves, but a quantitative description of which such laws are selected by the Euler--Maruyama approximation as a function of the initialization rate.  To the best of our knowledge, this initialization-rate-dependent selection mechanism, and in particular the double-logarithmic selector above, has not previously been quantified for this scheme.

The result is related to, but distinct from, two neighboring approximation literatures.  Euler approximation with irregular or H\"older diffusion coefficients has been studied under conditions ensuring a prescribed limiting law; see, for example, \citet{Yan2002} and \citet{GyongyRasonyi2011}.  A different selection problem appears in the zero-noise regularization of non-unique Peano dynamics: \citet{BaficoBaldi1982} and \citet{DelarueFlandoli2014} identify laws or transition scales selected as an external noise amplitude vanishes.  Here the perturbation is neither a coefficient regularization nor a vanishing external noise: the SDE is fixed, and the selector is generated by the interaction between the Euler mesh and the rate at which the numerical initialization approaches the degenerate point.

The paper is organized as follows.  Section 2 states the main results.  Section 3 studies the scale-invariant chain and its exit time.  Section 4 controls the continuous interpolation before microscopic exit.  Section 5 develops the discrete occupation estimate.  Section 6 identifies the subcritical limit.  Section 7 proves the critical and supercritical regimes and the cluster-set result.  Appendices A--C contain the detailed exit-time, occupation, and stochastic-analysis arguments.

\section{Main results}
\subsection{Distinguished weak solutions}
Let $X^{\ns}$ denote the non-sticky weak solution of \eqref{eq:sde}.  For $d\in[0,\infty)$, let $(\widetilde X^{\ns},\widetilde W)$ be a copy of a non-sticky weak solution and let $B$ be an independent Brownian motion.  Define
\begin{equation}\label{eq:delay}
 X_t^{(d)}=\begin{cases}
 0,&0\le t\le d,\\
 \widetilde X_{t-d}^{\ns},&t>d,
 \end{cases}
 \qquad
 W_t^{(d)}=\begin{cases}
 B_t,&0\le t\le d,\\
 B_d+\widetilde W_{t-d},&t>d.
 \end{cases}
\end{equation}
Then $W^{(d)}$ is a Brownian motion in the joined filtration and
\[
 X_t^{(d)}=\int_0^t|X_s^{(d)}|^\alpha\,dW_s^{(d)}.
\]
Thus $X^{(d)}$ is a weak solution.  We have $X^{(0)}=X^{\ns}$ in law.  We also write $X^{(\infty)}\equiv0$.

\subsection{Euler scheme and selection parameter}
Let $x_h\ne0$, $x_h\to0$, and let $X^h$ solve \eqref{eq:em}.  Set
\[
 r_h=h^{1/[2(1-\alpha)]},\qquad v_h=x_h/r_h,
\]
\[
 H(v)=\log\left(1+\log^+\frac1{|v|}\right),\qquad v\ne0,\qquad c_\alpha=|\log\alpha|,
\]
and
\begin{equation}\label{eq:Theta}
 \Theta_h=hH(v_h).
\end{equation}

For the scale-one chain
\[
 V_{k+1}=V_k+|V_k|^\alpha Z_{k+1},
\]
define
\[
 \tau_R(v)=\inf\{k\ge0:|V_k|\ge R\},\qquad V_0=v.
\]

\begin{theorem}[Exit-time law of large numbers]\label{thm:exitLLN}
For every fixed $R>0$,
\[
 \frac{\tau_R(v)}{H(v)}\longrightarrow\frac1{c_\alpha}
\]
in probability as $v\to0$, $v\ne0$.  Moreover,
\[
 \E_v[\tau_R(v)]\le C_R\{1+H(v)\},\qquad 0<|v|<R.
\]
\end{theorem}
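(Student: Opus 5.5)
We will prove the theorem by working in the logarithmic variable $L_k=\log(1/|V_k|)$, in which the chain is an affine random recursion. For $V_k\ne0$,
\[
 V_{k+1}=V_k\bigl(1+\operatorname{sgn}(V_k)|V_k|^{\alpha-1}Z_{k+1}\bigr),
\]
and therefore
\[
 L_{k+1}=L_k-\log\bigl|1+\operatorname{sgn}(V_k)|V_k|^{\alpha-1}Z_{k+1}\bigr|.
\]
When $|V_k|$ is tiny, the factor $|V_k|^{\alpha-1}$ is huge. Hence
\[
 \log\bigl|1\pm|V_k|^{\alpha-1}Z\bigr|=(1-\alpha)L_k+\log|Z|+\rho_k ,
\]
with an error $\rho_k$ that is small unless $|Z|\lesssim|V_k|^{1-\alpha}$. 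This gives
\[
 L_{k+1}=\alpha L_k-\log|Z_{k+1}|-\rho_{k+1}.
\]
Because $\operatorname{sgn}(V_k)Z_{k+1}$ is again standard normal and independent of the past, the driving noise $\xi_{k+1}:=-\log|Z_{k+1}|$ is i.i.d. with all exponential moments of order below one. The comparison recursion is $\ell_{k+1}=\alpha\ell_k+\xi_{k+1}$. Its explicit solution is $\ell_k=\alpha^k\ell_0+\sum_{j<k}\alpha^{j}\xi_{k-j}$, whose noise part is tight uniformly in $k$.

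The law of large numbers follows from this comparison in three steps.

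First, the main phase: $L$ descends from $L_0=\log(1/|v|)$ to a fixed large level $M$. Along the comparison recursion this takes $\log(L_0/M)/c_\alpha+O_P(1)$ steps. The error terms are summable along the path: the event $|Z_{k+1}|\le|V_k|^{1-\alpha}e^{L_k/4}$ has probability of order $e^{-cL_k}$, and $L_k$ decreases geometrically, so a union bound controls it. Hence with probability tending to one, $\tau_M=H(v)/c_\alpha+O_P(1)$, using that $H(v)=\log L_0+o(1)$ as $L_0\to\infty$.

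Second, upward excursions. A single large $\xi$ can push $L$ back up, but this only costs $O(\log\xi)$ extra steps. The tightness of $\sum_j\alpha^j\xi_{k-j}$ shows the total delay is $O_P(1)$.

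Third, the final phase: from $|V|\ge e^{-M}$ to $|V|\ge R$. Here we need a uniform bound on the exit time from the bounded region $\{e^{-M}\le|v|<R\}$. There is a uniform one-step probability $p>0$ of jumping from any $|v|\le R$ to $|v|\ge R$, because $|v|^\alpha|Z|\ge 2R$ is possible. The obstacle is that $|v|$ can be very small, making the jump probability degenerate. We handle this by noting that if the chain falls to small $|v|$, the main-phase analysis brings it back up within about $\log(1+\log^+1/|v|)$ steps, and a new fall to depth $L$ has probability $e^{-cL}$. Combining these, the time spent after first reaching level $M$ is $O_P(1)$, uniformly in the start. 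Dividing by $H(v)\to\infty$ gives $\tau_R/H(v)\to1/c_\alpha$ in probability.

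For the expectation bound, the cleanest route is a Lyapunov function. We look for $G(v)=H(v)/c'+K$ with $c'<c_\alpha$ (or simply $G=AH+B$) satisfying the drift inequality
\[
 \E_v G(V_1)\le G(v)-1\qquad\text{on }0<|v|<R.
\]
For small $|v|$ this follows from
\[
 \E\log\bigl(1+(\alpha L+\xi)^+\bigr)\le\log(1+L)-c_\alpha+o(1),
\]
which uses Jensen's inequality and the integrability of $\log|Z|$. The contribution of the singular event where $|Z|$ is tiny must be controlled: on it, $\E[H(V_1)]$ can grow, but only by $\E\log(1+\log(1/|Z|))<\infty$ weighted by a small probability. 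On the compact range $e^{-M}\le|v|<R$, we add a bounded correction $B\,\mathbf 1_{\{|v|<R\}}$ and use the uniform exit probability $p$ so that the drift stays $\le-1$. Dynkin's formula (optional stopping for $G(V_{k\wedge\tau_R})+k\wedge\tau_R$) then yields $\E_v\tau_R\le G(v)\le C_R(1+H(v))$.

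The main obstacle is the uniform drift estimate near the transition zone $|v|\approx1$ together with the singular small-$|Z|$ event, where $H$ can jump upward. I would first try to fix $M$ large and choose $B$ via the one-step exit probability, verifying the inequality separately on $|v|<e^{-M}$ and on $e^{-M}\le|v|<R$.
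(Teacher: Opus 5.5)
Your proposal is correct in outline, but it reaches the law of large numbers by a genuinely different route from the paper.

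\textbf{The paper's route.} The paper never couples paths. In the deep region $|v|<r$ it writes the Doob decomposition of $H(V_{k\wedge\sigma_r})$, whose increments converge in $L^2$ to $\log\alpha$. It bounds the martingale part by $\E M^2\le C\,\E[n\wedge\sigma_r]$ and gets early- and late-exit bounds for $\sigma_r$ from Chebyshev. It then shows $\sup_{r\le|x|<R}\E_x\tau_R<\infty$ via the geometric-trials inequality $M_N\le 1+(1-p)M_N+K_{r,R}$, and concludes by the strong Markov property at $\sigma_r$.

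\textbf{Your route, and what it costs.} Your pathwise coupling of $L_k=\log(1/|V_k|)$ with the AR(1) recursion $\ell_{k+1}=\alpha\ell_k-\log|Z_{k+1}|$ gives a sharper result, $\tau_R=H(v)/c_\alpha+O_P(1)$, whereas the paper's Chebyshev step only controls deviations at scale $\sqrt{H(v)}$. The price is bookkeeping that your sketch leaves implicit. The bad-event probabilities should be summed, up to the first bad event, against the explicit law of $\ell_k$, e.g.\ $\E e^{-c\ell_k}=e^{-c\alpha^k\ell_0}\prod_{j<k}\E|Z|^{c\alpha^j}$. They should not be summed against the geometric decay of $L_k$, since that decay is what you are proving. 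Also, the coupling fails with probability $O(e^{-cM})$ rather than $o(1)$, so you must let $v\to0$ first and $M\to\infty$ afterwards.

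\textbf{The mean bound.} Your Foster--Lyapunov function $G=AH+B\1_{\{|v|<R\}}$ merges the paper's two stages into a single drift inequality. Jensen gives $\E_vH(V_1)\le\log\bigl(1+\alpha L+\E U_L^+\bigr)$, with $U_L=\xi-\rho$ in your notation and $\E U_L^+$ bounded for large $L$. This supplies the needed one-sided drift without the $L^2$ convergence that the paper's two-sided Chebyshev argument requires. The collar inputs are exactly the paper's $p$ and $K_{r,R}$: a positive one-step exit probability on $e^{-M}\le|x|<R$, and $\sup_{e^{-M}\le|x|<R}\E_xH(V_1)<\infty$ from the bounded transition density and the integrability of $H$ at $0$. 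Note that the exit probability is uniform only on this collar, not on all of $0<|x|<R$ as the first sentence of your Step 3 asserts.

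\textbf{Reordering Step 3.} Prove the mean bound first. Then $\sup_{e^{-M}\le|x|<R}\E_x\tau_R\le AH(e^{-M})+B$, and Markov's inequality makes the time after your $\tau_M$ (the first time $|V_k|\ge e^{-M}$) $O_P(1)$ at once. This is cleaner than the renewal description in Step 3.
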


The next theorem is the discrete replacement for the occupation estimate used in our previous work \citep{TaguchiTanaka2019}.  Its subcritical assumption is substantive rather than technical: Theorem \ref{thm:selection} below shows that, without control of the initialization rate, delayed or absorbing limits may occur.

\begin{theorem}[Occupation estimate]\label{thm:occupation}
Let $h_n\downarrow0$ and let $x_n\ne0$ satisfy $x_n\to0$ and
\[
  \Theta_n:=h_nH(x_n/r_{h_n})\longrightarrow0 .
\]
Let $X^n$ denote the Euler approximation with mesh $h_n$ and initialization $x_n$. For every $T<\infty$ there is $C_T<\infty$, depending only on $T$, $\alpha$ and $\sup_n|x_n|$, such that, for $0<\varepsilon\le1$,
\[
  \limsup_{n\to\infty}
  \E\int_0^T\1_{\{|X^n_t|<\varepsilon\}}\,dt
  \le C_T\varepsilon^{1-2\alpha}.
\]
In particular,
\[
  \lim_{\varepsilon\downarrow0}\limsup_{n\to\infty}
  \E\int_0^T\1_{\{|X^n_t|<\varepsilon\}}\,dt=0 .
\]
Since the hypothesis is sequential, the same conclusion holds for the family $\{X^h\}$ whenever $\Theta_h\to0$ as $h\downarrow0$.
\end{theorem}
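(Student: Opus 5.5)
The plan is a discrete Tanaka (Itô–Tanaka type) argument applied directly to the grid chain $Y_k=X^n_{kh_n}$, with $h=h_n$, combined with a separate bound on the initial trapping period supplied by Theorem~\ref{thm:exitLLN}. The time integral splits into grid cells,
\[
 \E\int_0^T\1_{\{|X^n_t|<\varepsilon\}}\,dt \le h\sum_{k<T/h}\Pp(|Y_k|<2\varepsilon)+\E\int_0^T\1_{\{|X^n_t-X^n_{\eta_h(t)}|\ge\varepsilon\}}\,dt .
\]
On each cell, $X^n_t-X^n_{\eta_h(t)}$ is conditionally Gaussian with variance at most $|Y_k|^{2\alpha}h$. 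When $|Y_k|<2\varepsilon$, the second term is therefore at most $\Pp(|Z|\ge \varepsilon^{1-\alpha}h^{-1/2}/2^\alpha)$, which vanishes as $h\to0$ for fixed $\varepsilon$. When $|Y_k|\ge 2\varepsilon$ and $|X_t^n|<\varepsilon$, the increment over the cell must be at least $|Y_k|/2$, and its probability is bounded by Gaussian tails in $|Y_k|^{1-\alpha}h^{-1/2}\ge\varepsilon^{1-\alpha}h^{-1/2}$, which again vanishes. So it suffices to bound $\limsup h\sum_k\Pp(|Y_k|<\varepsilon)$.

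For this I take the test function $f(x)=|x|^{2-2\alpha}$. It is $C^1$ and convex, with $f''(x)=(2-2\alpha)(1-2\alpha)|x|^{-2\alpha}$ away from $0$. A second-order Taylor expansion along the Euler step $Y_{k+1}=Y_k+|Y_k|^\alpha\sqrt h Z_{k+1}$ should give
\[
 \E[f(Y_{k+1})-f(Y_k)\mid Y_k]\ \ge\ c_\alpha\, h\,\1_{\{r_h\lesssim|Y_k|<\varepsilon\}}\,\varepsilon^{-2\alpha}|Y_k|^{2\alpha}\ \ge\ c\,h\,\1_{\{Mr_h\le |Y_k|<\varepsilon\}}.
\]
The reason is that for $|Y_k|\ge Mr_h$ the step is small relative to $|Y_k|$, so $f''\approx(2-2\alpha)(1-2\alpha)|Y_k|^{-2\alpha}$, and multiplying by the variance $|Y_k|^{2\alpha}h$ gives exactly order $h$, independent of position. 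More precisely, I would instead use $f_\varepsilon(x)=\varepsilon^{2\alpha}$-normalized variants or simply $f$ itself. Since $f''\cdot|x|^{2\alpha}h$ is a constant times $h$, summing gives
\[
 c\,h\sum_{k<T/h}\Pp(Mr_h\le|Y_k|<\varepsilon)\le \E f(Y_{\lfloor T/h\rfloor})-f(x_n).
\]
That is the wrong scale in $\varepsilon$. To localize, I would use the truncated function $f_\varepsilon(x)=\min(|x|,\varepsilon)^{2-2\alpha}$, modified to a convex-then-linear $C^1$ function $g_\varepsilon$ equal to $|x|^{2-2\alpha}$ on $[-\varepsilon,\varepsilon]$ and extended linearly with slope $(2-2\alpha)\varepsilon^{1-2\alpha}$. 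Then $g_\varepsilon$ is convex with $|g_\varepsilon(x)|\le C\varepsilon^{1-2\alpha}(1+|x|)$, so the telescoped sum is bounded by $C\varepsilon^{1-2\alpha}(1+\E|Y_{T/h}|)\le C_T\varepsilon^{1-2\alpha}$. The bound on $\E|Y|$ follows from the martingale property and the $L^2$ bound $\E Y_k^2\le C(1+\sup|x_n|^2)$, obtained from $|y|^{2\alpha}\le 1+y^2$ and Gronwall. Convexity guarantees that all cells contribute nonnegatively, including those where the step crosses $0$ or $\pm\varepsilon$. The Gaussian-tail error coming from steps that exit the quadratic-type region is handled as above.

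It remains to treat the microscopic region $|Y_k|<Mr_h$, which is the main obstacle; this is where the hypothesis $\Theta_n\to0$ enters. Writing $V_k=Y_k/r_h$, the time spent in $\{|V|<M\}$ consists of the initial phase before $\tau_M(v_n)$ and of later returns. For the initial phase, Theorem~\ref{thm:exitLLN} gives $h\,\E\tau_M(v_n)\le C_M h(1+H(v_n))=C_M(h+\Theta_n)\to0$. For later returns, I would argue that each visit to $\{|V|<M\}$ after reaching level $M$ starts from a point whose depth is controlled. The chain jumps by $|V|^\alpha Z$, and landing at $|V'|<\delta$ from $|V|\in[\delta',M]$ has probability $O(\delta)$ by the Gaussian density, so $\E H(V_{\text{entry}})$ is bounded. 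Hence the expected sojourn per visit is $O(1)$ in chain steps, again by Theorem~\ref{thm:exitLLN}. The number of visits up to $T/h$ must then be bounded by $o(1/h)$. For this I would reuse the Tanaka identity with $g$ at scale $\varepsilon=Mr_h$: it gives $h\cdot\#\{\text{steps in }[Mr_h,2Mr_h]\}\lesssim (Mr_h)^{1-2\alpha}\to0$. Since each excursion below $Mr_h$ is entered from a neighbouring band with positive probability per step, this yields $h\,\E[\text{steps with }|V|<M]\to0$. Making this comparison rigorous, via a renewal/strong Markov argument for the chain $V$ that is uniform over entry points, is where I expect the real work to lie. Combining the three pieces gives $\limsup_n\E\int_0^T\1_{\{|X^n_t|<\varepsilon\}}dt\le C_T\varepsilon^{1-2\alpha}$, and the sequential-to-family statement is immediate.
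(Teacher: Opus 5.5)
Your treatment of the mesoscopic region and of the interpolation error is sound, and it is the paper's argument in a different package. For your convex $g_\varepsilon$ one has $\E[g_\varepsilon(y+s_h(y)Z)]-g_\varepsilon(y)=\tfrac12\int g_\varepsilon''(a)\mathcal R_h(y,a)\,da$. So you integrate the discrete Tanaka remainder against $|a|^{-2\alpha}\1_{\{|a|<\varepsilon\}}$ in one stroke, instead of annulus by annulus as in \eqref{eq:localTanaka}, and the bound $0\le g_\varepsilon(x)\le C\varepsilon^{1-2\alpha}|x|$ removes the dyadic summation. Your cell-wise Gaussian tail argument is a valid substitute for $D_h(T)\to0$. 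The initial core excursion is handled exactly as in the paper, via $G_M(v)\le C_M(1+H(v))$.

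The gap is the step you yourself flag: later returns to $\{|V|<M\}$. Your stated reason, that each excursion is entered from the neighbouring band $\{M\le|V|<2M\}$, is false. One Euler step from any $|V_k|\ge2M$ reaches $(-M,M)$ with positive probability, and your Tanaka bound at scale $Mr_h$ says nothing about the $O(1/h)$ steps spent far from the core. Moreover, the $O(1)$ sojourn bound holds only after averaging $G_M$ over the entrance law, since $G_M$ is unbounded near $0$. That average must be taken through the Markov property at deterministic steps $k$, because $\sigma_m-1$ is not a stopping time.

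The paper supplies exactly this. It uses the charging inequality \eqref{eq:B-exc-decomp} with $\mathcal E_R(u)=\E_u[G_R(V_1);|V_1|<R]$. On a bounded collar it bounds $\mathcal E_R$ by the Tanaka remainder telescoped at $a=\pm Rr_h$, which gives $h/r_h$. In the far field it uses $\mathcal E_R(u)\le Ce^{-c|u|^{2(1-\alpha)}}$, summed against the dyadic annulus estimates and, beyond a fixed $\delta$, against $T/h$ steps with weight $e^{-c/h}$.

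Alternatively, your heuristic can be made rigorous directly. For $|u|\ge2M$, the transition density on the same-side band $[M,2M]$ dominates its supremum on $(-M,M)$. Hence $\mathcal E_M(u)\le M^{-1}\|G_M\|_{L^1(-M,M)}\Pp_u(M\le|V_1|\le2M)$, so far-field entrances are charged to band visits at the next step. Your $g_\varepsilon$ estimate with $\varepsilon\asymp Mr_h$ bounds those visits by $C(Mr_h)^{1-2\alpha}=CM^{1-2\alpha}h/r_h$. Without one of these arguments, the proposal does not establish $h\,\E\#\{k<T/h:|V_k|<M\}\to0$, which is the heart of the theorem.
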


We use $C_{\loc}([0,\infty))$ with the local-uniform topology, for example with metric
\[
 d_{\loc}(f,g)=\sum_{m=1}^\infty2^{-m}(1\wedge\|f-g\|_{[0,m]}).
\]

\begin{theorem}[Selection theorem]\label{thm:selection}
Let $x_h\ne0$ and $x_h\to0$.
\begin{enumerate}[label=(\roman*)]
\item If $\Theta_h\to0$, then
\[
 X^h\Rightarrow X^{\ns}
\]
in $C_{\loc}([0,\infty))$.
\item If $\Theta_h\to\theta\in(0,\infty)$, then
\[
 X^h\Rightarrow X^{(d_\theta)},\qquad d_\theta=\theta/c_\alpha,
\]
in $C_{\loc}([0,\infty))$.
\item If $\Theta_h\to\infty$, then
\[
 X^h\longrightarrow0
\]
in probability in $C_{\loc}([0,\infty))$.
\end{enumerate}
\end{theorem}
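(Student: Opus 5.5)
The proof splits into a microscopic trapping phase and a macroscopic restart phase, glued by the strong Markov property of the Euler chain at grid times. Fix $R>0$ (later $R\downarrow0$ after $h\downarrow0$, in a sense made precise below) and let $\sigma_h=h\,\tau_R(v_h)$ be the physical time at which $|X^h_{kh}|$ first reaches $Rr_h$. By the exact scaling \eqref{eq:rescaledchainintro}, $\tau_R(v_h)$ has the law of the scale-one exit time from $v_h$, so Theorem~\ref{thm:exitLLN} gives $\sigma_h/\Theta_h\to1/c_\alpha$ in probability whenever $H(v_h)\to\infty$. If instead $H(v_h)$ stays bounded along a subsequence, the moment bound $\E\tau_R\le C_R(1+H)$ gives $\sigma_h\to0$. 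Hence $\sigma_h\to d_\theta$ in probability in case (ii), $\sigma_h\to0$ in case (i), and $\sigma_h\to\infty$ in case (iii). I would also bound the interpolated path before $\sigma_h$: on $[0,\sigma_h]$ the grid values are below $Rr_h$, and the increments within a step have size at most $|Y^h_k|^\alpha\sqrt h\cdot O(\sqrt{\log(1/h)})$ uniformly over polynomially many steps. Then $\sup_{t\le\sigma_h\wedge T}|X^h_t|\to0$ in probability (this is the Section 4 control). This already proves (iii), since $\sigma_h\wedge T=T$ with probability tending to one.

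For (i), I would use Theorem~\ref{thm:occupation} together with standard martingale tightness. The quadratic variation $\int_0^t|X^h_{\eta_h(s)}|^{2\alpha}ds$ has moments bounded on compacts via Burkholder--Davis--Gundy and Gronwall, using $2\alpha<1$. This gives tightness in $C_{\loc}$ by Kolmogorov. For any limit point $(X,W)$, the lag $X^h_{\eta_h(s)}-X^h_s\to0$ in $L^2$ and continuity of $|x|^\alpha$ identify $X=\int|X|^\alpha dW$ by the usual martingale-problem argument. The occupation bound, passed to the limit through open sets $\{|x|<\varepsilon\}$ by the portmanteau theorem, yields $\E\int_0^T\1_{\{X_t=0\}}dt\le C_T\varepsilon^{1-2\alpha}$ for all $\varepsilon$, so the limit spends zero time at $0$. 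Pathwise uniqueness within this class (Bass--Burdzy--Chen) together with Yamada--Watanabe then gives uniqueness in law, so every limit point equals $X^{\ns}$.

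For (ii), I would condition on $\mathcal F_{\sigma_h}$. At time $\sigma_h$ the state is $X^h_{\sigma_h}$ with $|X^h_{\sigma_h}|\ge Rr_h$, and one step overshoot keeps $|X^h_{\sigma_h}|\le C r_h$ with high probability, so $H(X^h_{\sigma_h}/r_h)$ is bounded. The restarted scheme $X^h_{\sigma_h+\cdot}$ is therefore an Euler scheme with a random but subcritical initialization ($\Theta'_h=O(h)\to0$). The key tool is a \emph{compact-uniform} version of (i): for every compact family of initial points satisfying $\sup hH(x/r_h)\to0$, the laws of $X^{h,x}$ converge to $X^{\ns}$ uniformly in $x$. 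This follows from (i) by a contradiction/subsequence argument, since (i) is sequential in $(h_n,x_n)$. Combining the restart, $\sigma_h\to d_\theta$, $\sup_{t\le\sigma_h}|X^h_t|\to0$, and the independence of the post-$\sigma_h$ increments of $W$, one gets that for bounded Lipschitz $F$ the quantity $\E F(X^h)$ converges to $\E F(X^{(d_\theta)})$, where the shift by $\sigma_h$ versus $d_\theta$ is handled by equicontinuity from the tightness estimates.

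The main obstacle is the occupation estimate behind (i), i.e.\ Theorem~\ref{thm:occupation}. Since it is already given, the delicate remaining point is the uniformity of (i) over restart points, which is needed to handle the random $\mathcal F_{\sigma_h}$-measurable initialization. I would handle it by noting that Theorem~\ref{thm:occupation}'s constant depends only on $\sup|x_n|$ and on the sequence through $\Theta_n\to0$, so the whole argument of (i) runs verbatim along arbitrary sequences $x_n$ in the restart set. The weak limit is then a continuous function of nothing, being always $X^{\ns}$, which yields uniformity.
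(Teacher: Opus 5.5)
Your proposal is correct and follows essentially the paper's route: the exit-time law of large numbers for the trapping time $h\tau_R$, pre-exit collapse, a compact-uniform restart obtained by contradiction from the sequential subcritical result, and concatenation at the exit time. The one real deviation is the uniqueness step. You take the uniqueness of the non-sticky law from Bass--Burdzy--Chen pathwise uniqueness plus Yamada--Watanabe; this is legitimate, since non-stickiness is a property of the law of $X$ and so survives the Yamada--Watanabe coupling. The paper instead uses a self-contained Dambis--Dubins--Schwarz time-change argument.

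One caution concerns the overshoot bound $|X^h_{\sigma_h}|\le Cr_h$ with high probability. It is not a one-step triviality: the Gaussian at the exit step is selected by the exit event, and $\E_v\tau_R\to\infty$ as $v\to0$, so per-step tail bounds cannot simply be summed. This is why the paper proves it separately as Proposition~\ref{prop:overshoot}. Your argument, however, only needs two weaker facts. The first is $H(X^h_{\sigma_h}/r_h)\le H(R)$, which is automatic. The second is $X^h_{\sigma_h}\to0$ in probability, which follows on $\{\sigma_h\le T\}$ from $|X^h_{\sigma_h}|\le r_h\bigl(R+R^\alpha\max_{k\le T/h}|Z_k|\bigr)$ together with $r_h\sqrt{\log(1/h)}\to0$.
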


\begin{corollary}[Every finite deterministic delay]\label{cor:everydelay}
For every $d\in[0,\infty)$ there exists a deterministic sequence $x_h\ne0$, $x_h\to0$, such that
\[
 X^h\Rightarrow X^{(d)}.
\]
For $d>0$, one may take
\[
 x_h=\exp\{-\exp(dc_\alpha/h)\}.
\]
For $d=0$, any polynomial initialization $x_h=h^\beta$, $\beta>0$, is sufficient.
\end{corollary}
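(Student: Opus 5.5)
The corollary follows from Theorem~\ref{thm:selection} once we check that the proposed initializations produce the right limit of the selection parameter $\Theta_h=hH(x_h/r_h)$. The plan is to compute $\Theta_h$ explicitly in each case and invoke part (i) or (ii) of Theorem~\ref{thm:selection}.

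\emph{Case $d>0$.} Take $x_h=\exp\{-\exp(dc_\alpha/h)\}$, which is nonzero and tends to $0$. Write $L_h=\log(1/x_h)=\exp(dc_\alpha/h)$. Since $\log(1/r_h)=\frac{1}{2(1-\alpha)}\log(1/h)$, we get
\[
 \log\frac{r_h}{x_h}=L_h-\frac{\log(1/h)}{2(1-\alpha)}.
\]
This is positive for small $h$, because $L_h$ grows exponentially in $1/h$. Hence $\log^+(r_h/x_h)=L_h(1+o(1))$, and
\[
 H(x_h/r_h)=\log\bigl(1+L_h(1+o(1))\bigr)=\frac{dc_\alpha}{h}+o(1).
\]
The $o(1)$ term is $\log(1+o(1))+\log(1+L_h^{-1})$, which tends to $0$. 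Therefore $\Theta_h=dc_\alpha+o(h)\to dc_\alpha=:\theta\in(0,\infty)$, and Theorem~\ref{thm:selection}(ii) gives $X^h\Rightarrow X^{(d_\theta)}$ with $d_\theta=\theta/c_\alpha=d$.

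\emph{Case $d=0$.} Take $x_h=h^\beta$ with $\beta>0$. Then
\[
 \log^+\frac{r_h}{x_h}\le\Bigl(\beta+\frac1{2(1-\alpha)}\Bigr)^{+}\log\frac1h,
\]
with only the $\beta$ part actually contributing. So $H(x_h/r_h)=O(\log\log(1/h))$ and $\Theta_h=O(h\log\log(1/h))\to0$. Theorem~\ref{thm:selection}(i) then gives $X^h\Rightarrow X^{\ns}=X^{(0)}$ in law. This also covers the existence claim for $d=0$.

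There is no real obstacle here; all the analytic content lies in Theorem~\ref{thm:selection}. The only care needed is the bookkeeping of the $r_h$ correction in the double logarithm. We must confirm that this correction is negligible on the scale $1/h$ after taking two logarithms. It is, because it is additive of order $\log(1/h)$ inside $\log^+$, whereas $L_h$ is exponential in $1/h$.
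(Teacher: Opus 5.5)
Your proposal is correct and is essentially the paper's own proof: compute the limit of $\Theta_h$ for the two explicit initializations, noting that the $r_h$ correction is negligible after two logarithms, and then invoke parts (ii) and (i) of Theorem~\ref{thm:selection}. One harmless slip: for $x_h=h^\beta$ the exact value is $\log^+(r_h/x_h)=\bigl(\beta-\frac{1}{2(1-\alpha)}\bigr)^+\log(1/h)$, so your displayed bound with a plus sign is loose but still valid, and the conclusion $\Theta_h\to0$ is unaffected.
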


\begin{corollary}[Cluster set]\label{cor:cluster}
Let $\mathcal C(\Theta)$ be the set of subsequential limits of $\Theta_h$ in $[0,\infty]$.  The cluster set of the laws $\{\mathcal L(X^h)\}$ in $\mathcal P(C_{\loc}([0,\infty)))$, endowed with weak convergence, is
\[
 \{\mathcal L(X^{(\theta/c_\alpha)}):\theta\in\mathcal C(\Theta)\cap[0,\infty)\}
 \cup
 \{\delta_0:\infty\in\mathcal C(\Theta)\}.
\]
Consequently the Euler laws converge if and only if $\mathcal C(\Theta)$ is a singleton.
\end{corollary}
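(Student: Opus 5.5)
The cluster-set corollary follows from Theorem~\ref{thm:selection} applied along subsequences, together with a continuity statement for the map $d\mapsto\mathcal L(X^{(d)})$. We first check that Theorem~\ref{thm:selection} holds for arbitrary sequences $h_n\downarrow0$ with $x_n\ne0$, $x_n\to0$. Its proof, like that of Theorem~\ref{thm:occupation}, is sequential in nature: the hypothesis only involves the limit of $\Theta_{h_n}$. So if $\Theta_{h_n}\to\theta\in[0,\infty)$, then $\mathcal L(X^{h_n})\Rightarrow\mathcal L(X^{(\theta/c_\alpha)})$. If instead $\Theta_{h_n}\to\infty$, then $X^{h_n}\to0$ in probability, so $\mathcal L(X^{h_n})\Rightarrow\delta_0$, the law of $X^{(\infty)}$.

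\emph{Inclusion ``$\supseteq$''.} Let $\theta\in\mathcal C(\Theta)$. Choose $h_n\downarrow0$ with $\Theta_{h_n}\to\theta$ in $[0,\infty]$. The sequential form of Theorem~\ref{thm:selection} then shows that $\mathcal L(X^{(\theta/c_\alpha)})$, or $\delta_0$ when $\theta=\infty$, is a subsequential limit of the Euler laws.

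\emph{Inclusion ``$\subseteq$''.} Let $\mu$ be a weak limit of $\mathcal L(X^{h_n})$ along some $h_n\downarrow0$. Since $[0,\infty]$ is compact, a further subsequence satisfies $\Theta_{h_{n'}}\to\theta\in[0,\infty]$, and $\theta\in\mathcal C(\Theta)$ by definition. Applying the sequential theorem along $h_{n'}$ gives a limit in the stated set. Weak limits are unique, so this limit equals $\mu$.

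\emph{Convergence criterion.} If $\mathcal C(\Theta)=\{\theta\}$, compactness of $[0,\infty]$ gives $\Theta_h\to\theta$, and Theorem~\ref{thm:selection} yields convergence. Conversely, suppose the laws converge but $\mathcal C(\Theta)$ contains two points $\theta_1\ne\theta_2$. Then the cluster set must be a singleton, which would force $\mathcal L(X^{(d_1)})=\mathcal L(X^{(d_2)})$ with $d_i=\theta_i/c_\alpha\in[0,\infty]$ distinct. We rule this out by showing that $d\mapsto\mathcal L(X^{(d)})$ is injective on $[0,\infty]$. The main obstacle is exactly this injectivity, which uses the defining properties of $X^{\ns}$ and $X^{(d)}$.

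For $d<\infty$, compare with $\delta_0$. The non-sticky solution spends zero Lebesgue time at zero. Hence $X^{\ns}\not\equiv0$ on $[0,1]$ almost surely, and so $X^{(d)}$ is not identically zero on $[d,d+1]$. This distinguishes it from $\delta_0$.

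For $d_1<d_2<\infty$, use the functional
\[
 \Lambda(\omega)=\int_0^{d_2}\1_{\{\omega_t=0\}}\,dt .
\]
Under $X^{(d_2)}$ we have $\Lambda=d_2$ almost surely. Under $X^{(d_1)}$, the zero-occupation property of $\widetilde X^{\ns}$ on $[0,d_2-d_1]$ gives $\Lambda=d_1$ almost surely. The functional $\Lambda$ is measurable on $C_{\loc}([0,\infty))$, so the two laws differ.

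Finally, the cluster set is taken in $\mathcal P(C_{\loc})$ under weak convergence. The argument used only subsequential weak limits, and this topology is metrizable (for example by the Prokhorov metric), so no further tightness argument is needed.
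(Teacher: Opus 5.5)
Your proposal is correct and follows essentially the same route as the paper: extract subsequences of $\Theta_h$ in the compact $[0,\infty]$, apply Theorem~\ref{thm:selection} along them (you are slightly more explicit that it is used in sequential form), and use injectivity of $d\mapsto\mathcal L(X^{(d)})$. The only difference is cosmetic. You separate finite delays with the zero-occupation functional $\Lambda$, whereas the paper uses the continuous functional $\sup_{0\le s\le t}|f(s)|$ with $t\in(d_1,d_2)$; both rest on the same zero-occupation property of $X^{\ns}$.
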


\begin{remark}\label{rem:fixedhorizon}
On a fixed horizon $[0,T]$, all delays $d\ge T$ produce the zero path.  Distinct-delay and complete cluster-set statements are therefore naturally formulated on $C_{\loc}([0,\infty))$.
\end{remark}

\section{The rescaled Euler chain and its exit time}
At grid points,
\[
 Y_{k+1}^h=Y_k^h+|Y_k^h|^\alpha\sqrt h\,Z_{k+1}.
\]
With $V_k^h=Y_k^h/r_h$ and \eqref{eq:rh},
\begin{equation}\label{eq:chain}
 V_{k+1}^h=V_k^h+|V_k^h|^\alpha Z_{k+1}.
\end{equation}
Thus the transition mechanism is independent of $h$.

Let
\[
 \tau_R^h=\inf\{k\ge0:|V_k^h|\ge R\},\qquad S_h^R=h\tau_R^h.
\]
By Theorem \ref{thm:exitLLN}, whenever $v_h\to0$,
\begin{equation}\label{eq:physicalexit}
 \frac{S_h^R}{hH(v_h)}
 =\frac{\tau_R(v_h)}{H(v_h)}
 \longrightarrow\frac1{c_\alpha}
\end{equation}
in probability.  In the critical and supercritical regimes considered below,
$\Theta_h\to\theta>0$ or $\Theta_h\to\infty$ implies
$H(v_h)\to\infty$, and hence $v_h\to0$; thus the hypothesis of Theorem
\ref{thm:exitLLN} is automatic in precisely the regimes where this exit law
is used.  Since $hH(v_h)=\Theta_h$, the formula converts the number of
microscopic Euler steps into physical time.  If
$\Theta_h\to\theta\in(0,\infty)$, Slutsky's theorem gives
\[
 S_h^R
 =\Theta_h\frac{\tau_R(v_h)}{H(v_h)}
 \longrightarrow d_\theta=\theta/c_\alpha.
\]
If $\Theta_h\to\infty$, then for every fixed $T$ and every
$0<\delta<c_\alpha^{-1}$,
\begin{align*}
 \Pp(S_h^R\le T)
 &\le
 \Pp\left(
 \frac{\tau_R(v_h)}{H(v_h)}\le\delta
 \right)
 +\mathbf 1_{\{\delta\Theta_h\le T\}}
 \longrightarrow0.
\end{align*}
For $\Theta_h\to0$, formula \eqref{eq:physicalexit} alone need not be used
when $H(v_h)$ stays bounded; the subcritical proof instead relies on the
occupation estimate of Section 5, which covers all such initializations.

The depth $H$ also clarifies the initialization scales.  If $x_h=h^\beta$, then $\log(r_h/|x_h|)$ grows at most logarithmically in $1/h$, so
\[
 \Theta_h\to0.
\]
More generally, whenever
\[
 \log\log(1/|x_h|)=o(1/h),
\]
the initialization is subcritical.  A finite positive delay appears only when the double logarithm is of order $1/h$.

The proof of Theorem \ref{thm:exitLLN} is given in Appendix A.  Its key exact identity is obtained by writing $v=\xi e^{-L}$:
\[
 V_1=e^{-\alpha L}\left(Z+\xi e^{-(1-\alpha)L}\right).
\]
Thus the logarithmic depth changes by $\log\alpha+o_{L^2}(1)$ in the deep region.  A stopped martingale argument then yields the exit-time law of large numbers.

\section{Behavior before the microscopic exit}
For $kh\le t<(k+1)h$,
\[
 X_t^h=Y_k^h+|Y_k^h|^\alpha(W_t-W_{kh}).
\]
Before $\tau_R^h$, $|Y_k^h|<Rr_h$.  Brownian scaling and a union bound over $O(h^{-1})$ intervals show that the factor $r_h$ multiplying the maximal rescaled Brownian increment is sufficient to force the interpolation error to vanish.

\begin{proposition}[Pre-exit collapse]\label{prop:preexit}
For every $R,T>0$,
\[
 \sup_{0\le t\le S_h^R\wedge T}|X_t^h|\longrightarrow0
\]
in probability.
\end{proposition}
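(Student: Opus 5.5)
We decompose the supremum over $[0,S_h^R\wedge T]$ into contributions from the grid values and from the within-step Brownian fluctuations. Fix $R,T>0$ and set $N_h=\lceil T/h\rceil$. For $t\le S_h^R\wedge T$ with $kh\le t<(k+1)h$ we have $k<\tau_R^h$, since $t<h\tau_R^h$ when $t<S_h^R$. The boundary case $t=S_h^R$ is a grid point, where $|Y^h_{\tau}|$ must be handled separately. Hence $|Y_k^h|<Rr_h$, and by the interpolation formula
\[
 |X_t^h|\le |Y_k^h|+|Y_k^h|^\alpha|W_t-W_{kh}|
 \le Rr_h+R^\alpha r_h^\alpha\sup_{kh\le s<(k+1)h}|W_s-W_{kh}|.
\]
Taking the supremum over $k\le N_h$ gives
\[
 \sup_{0\le t<S_h^R\wedge T}|X_t^h|\le Rr_h+R^\alpha r_h^\alpha\sqrt h\, M_h,\qquad M_h=\max_{0\le k\le N_h}\sup_{0\le u\le 1}\frac{|W_{kh+uh}-W_{kh}|}{\sqrt h}.
\]
By \eqref{eq:rh}, $r_h^\alpha\sqrt h=r_h$, so the bound becomes $r_h(R+R^\alpha M_h)$.

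Next we control $M_h$. By Brownian scaling, the $N_h+1$ normalized block suprema are identically distributed copies of $\sup_{[0,1]}|B|$, which has Gaussian tails $\Pp(\sup_{[0,1]}|B|>a)\le 4e^{-a^2/2}$. A union bound over $O(T/h)$ blocks yields
\[
 \Pp\bigl(M_h>\sqrt{4\log(1/h)}\bigr)\le C_T h^{-1}h^{2}\to0.
\]
Therefore, with probability tending to one,
\[
 \sup_{t<S_h^R\wedge T}|X_t^h|\le r_h\bigl(R+R^\alpha\,2\sqrt{\log(1/h)}\bigr).
\]
This tends to zero, because $r_h=h^{1/[2(1-\alpha)]}$ decays polynomially while $\sqrt{\log(1/h)}$ grows only logarithmically.

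It remains to treat the endpoint $t=S_h^R$ when $S_h^R\le T$. Here $X^h_{S_h^R}=Y^h_{\tau}$, and we write $\tau=\tau_R^h$. This value is the left limit of the interpolation on the preceding step, namely
\[
 Y_{\tau-1}^h+|Y^h_{\tau-1}|^\alpha(W_{\tau h}-W_{(\tau-1)h}).
\]
By continuity of $X^h$, it is already dominated by the same bound, since the block supremum includes the right endpoint. The case $\tau=0$ cannot occur for small $h$, because $|v_h|\to0$ would be needed. If instead $|v_h|\ge R$, then $S_h^R=0$, and the claim reduces to $|x_h|\to0$. Applying continuity at the endpoint in this way closes the argument.

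The main obstacle is only this bookkeeping at the stopping index. In particular, one must make sure the overshoot step is controlled by the pre-exit value $|Y_{\tau-1}^h|<Rr_h$ together with the Brownian block maximum, rather than by $|Y_\tau^h|$ itself, which may be large relative to $r_h$ but is still $O(r_h\sqrt{\log(1/h)})$. Beyond that, the proof relies only on the exact scaling identity \eqref{eq:rh} and the Gaussian maximal bound.
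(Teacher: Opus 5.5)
Your argument is correct and is essentially the paper's proof. You bound the path before exit by $r_h(R+R^\alpha\max_k B_k^*)$ via $r_h^\alpha\sqrt h=r_h$, cover the terminal block through the right endpoint of the block supremum, and control the maximum of $O(T/h)$ Brownian block suprema by a Gaussian union bound; you use the threshold $2\sqrt{\log(1/h)}$ where the paper uses $\varepsilon/(R^\alpha r_h)$, which makes no difference. One slip: your claim that $\tau_R^h=0$ cannot occur for small $h$ is false, since it occurs exactly when $|x_h|\ge Rr_h$ (for example under polynomial initializations), but your next sentence handles that case correctly, so nothing is lost.
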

\begin{proof}
Put $N_h(T)=\lfloor T/h\rfloor$.  On $\{\tau_R^h=0\}$ we have $S_h^R=0$, so the left-hand side in the proposition equals $|x_h|$, which tends to zero.  It therefore remains to work on $\{\tau_R^h\ge1\}$.  If $k<\tau_R^h$, then
$|Y_k^h|<Rr_h$, and for $kh\le t<(k+1)h$,
\[
 |X_t^h|
 \le Rr_h+(Rr_h)^\alpha
       \sup_{kh\le s\le(k+1)h}|W_s-W_{kh}|.
\]
Since $r_h^\alpha\sqrt h=r_h$, Brownian scaling gives random variables
\[
 B_k^*=\sup_{0\le u\le1}|B_u^{(k)}|
\]
with the common law of $\sup_{0\le u\le1}|B_u|$ such that, on the part of the
$k$th interval preceding the exit,
\[
 \sup |X_t^h|\le r_h\{R+R^\alpha B_k^*\}.
\]
The terminal interval containing $S_h^R$ is covered by the same estimate up to
the exit time itself.  Hence
\begin{equation}\label{eq:preexit-max}
 \sup_{0\le t\le S_h^R\wedge T}|X_t^h|
 \le r_h\left\{R+R^\alpha
       \max_{0\le k\le N_h(T)}B_k^*\right\}.
\end{equation}
For every $a>0$, the reflection principle and a Gaussian tail bound imply
\[
 \Pp(B_k^*>a)\le 4\exp(-a^2/2).
\]
Therefore, for every $\varepsilon>0$ and all sufficiently small $h$,
\begin{align*}
 &\Pp\left(
 r_hR^\alpha\max_{0\le k\le N_h(T)}B_k^*>\varepsilon
 \right)\\
 &\qquad\le
 4\{N_h(T)+1\}
 \exp\left\{-\frac{\varepsilon^2}
 {2R^{2\alpha}r_h^2}\right\}.
\end{align*}
Because $N_h(T)+1=O(h^{-1})$ whereas
$r_h^{-2}=h^{-1/(1-\alpha)}$, the right-hand side tends to zero.  Since
$r_hR\to0$, \eqref{eq:preexit-max} proves the assertion.
\end{proof}

\begin{proposition}[Exit overshoot]\label{prop:overshoot}
For every $R>0$, the family
\[
 \{V_{\tau_R(v)}:0<|v|<R\}
\]
is tight.  Consequently, whenever $0<|x_h|<Rr_h$ eventually,
\[
 X_{S_h^R}^h=r_hV_{\tau_R^h}^h\longrightarrow0
\]
in probability.
\end{proposition}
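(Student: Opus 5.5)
The plan has two parts: prove tightness of the overshoot by bounding a single step, then transfer to the Euler scheme by scaling.

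\emph{Tightness.} The chain is started at $V_0=v$ with $0<|v|<R$, so $\tau_R(v)\ge1$. Write $k=\tau_R(v)-1$. At time $k$ the chain has not yet exited, so $|V_k|<R$, and the exit step gives
\[
 V_{\tau_R}=V_k+|V_k|^\alpha Z_{k+1},\qquad\text{hence}\qquad |V_{\tau_R}|\le R+R^\alpha|Z_{k+1}|.
\]
The difficulty is that $Z_{k+1}$ is evaluated at a random time, and exit is biased toward large increments, so we cannot simply say $Z_{k+1}$ has the Gaussian law.

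One way around this is a union bound over all steps:
\[
 \Pp\bigl(|V_{\tau_R}|>R+R^\alpha a\bigr)\le\sum_{j\ge0}\Pp\bigl(\tau_R>j,\ |Z_{j+1}|>a\bigr)=\E[\tau_R]\,\Pp(|Z|>a).
\]
The equality holds because $\{\tau_R>j\}\in\sigma(Z_1,\dots,Z_j)$ is independent of $Z_{j+1}$. However, Theorem~\ref{thm:exitLLN} only gives $\E_v\tau_R\le C_R(1+H(v))$, which blows up as $v\to0$. So this bound alone does not give uniformity in $v$.

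To fix this, we split by the size of $|V_k|$ just before exit.

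\textbf{Case $|V_k|\ge\delta$.} Fix a small $\delta\in(0,R)$. For an exit occurring from the annulus $\delta\le|V_k|<R$, we apply the union bound only to the visits to that annulus:
\[
 \Pp(\text{overshoot}>b,\ |V_k|\ge\delta)\le \E\bigl[N_\delta\bigr]\,\Pp(|Z|>a),
\]
where $N_\delta$ is the number of steps before $\tau_R$ spent in $\{\delta\le|V|<R\}$. From any point $u$ in the annulus, one step exits with probability at least $p_\delta=\inf_{\delta\le|u|<R}\Pp(|u+|u|^\alpha Z|\ge R)>0$. Each visit to the annulus is therefore followed by exit with probability at least $p_\delta$, independently of the past, so $N_\delta$ is dominated by a geometric variable and $\E N_\delta\le 1/p_\delta$ uniformly in $v$.

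\textbf{Case $|V_k|<\delta$.} A jump from $|V_k|<\delta$ to beyond $R$ requires $\delta^\alpha|Z|\ge R-\delta$, which is already a large-deviation event. So we cannot simply discard this case; we must show it contributes uniformly little.

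\emph{Handling deep exits: the main obstacle.} Deep visits can be many, roughly $H(v)$ of them, so the union bound fails here. The idea is to use the exact scaling of the chain. If $U_j:=V_j/c$ with $c=|V_k|$, then $U$ satisfies $U_{j+1}=U_j+c^{\alpha-1}|U_j|^\alpha Z_{j+1}$. A jump from level $c$ to modulus beyond $y$ requires $|Z|\gtrsim y\,c^{-\alpha}$, which has tiny probability when $c$ is small.

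I would try a dyadic decomposition. For each level $c\in[2^{-m-1},2^{-m})$, let $M_m$ be the number of visits to that shell. The bound then becomes
\[
 \sum_m\E[M_m]\,\Pp\bigl(|Z|\ge (y-2^{-m})2^{m\alpha}\bigr).
\]
I need $\E M_m$ to be bounded uniformly in $m$ and $v$. This is plausible: from any point with $|u|\in[2^{-m-1},2^{-m})$ the chain moves by a factor of order $|u|^{\alpha-1}\gg1$, so it leaves the shell with probability bounded below uniformly in $m$. The loose step is that the chain could return to the shell, and bounding the number of returns needs more than the single-step estimate. With $\E M_m$ bounded, the Gaussian tails $e^{-y^22^{2m\alpha}/4}$ sum in $m$ and vanish as $y\to\infty$, which gives tightness.

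\emph{Consequence for the Euler scheme.} If $0<|x_h|<Rr_h$ eventually, then $v_h=x_h/r_h$ satisfies $0<|v_h|<R$. By \eqref{eq:chain}, the grid values are $Y^h_k=r_hV^h_k$, and $S_h^R=h\tau_R^h$ is a grid time, so $X^h_{S^R_h}=r_hV^h_{\tau^h_R}$. By tightness, $V^h_{\tau^h_R}$ is bounded in probability uniformly in $h$. Since $r_h\to0$, the product $r_hV^h_{\tau^h_R}$ converges to $0$ in probability.
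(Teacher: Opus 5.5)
\textbf{There is a genuine gap, in the deep-exit case.} Your reduction to the pre-exit state is correct, and so is the collar case, including the geometric domination $\E N_\delta\le 1/p_\delta$. The transfer to $X^h_{S_h^R}=r_hV^h_{\tau_R^h}$ is also correct.

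The deep-exit case is where all the difficulty lies. Your argument depends on the claim that $\E M_m$, the expected number of visits to $S_m=\{2^{-m-1}\le|V|<2^{-m}\}$ before $\tau_R$, is bounded uniformly in $m$ and in $v$. You do not prove this. The reason you give is that the chain leaves $S_m$ in one step with probability bounded below. That controls only the length of a single sojourn in $S_m$, not the number of returns to it. A uniform bound on returns is a Green-function estimate for the killed chain, uniform in the starting point. The only such estimate available, $\E_v\tau_R\le C_R(1+H(v))$, blows up as $v\to0$. As written, tightness uniformly in $v$ is therefore not established.

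\textbf{The missing idea is to stop counting visits and compare a large overshoot with an ordinary exit from the same pre-exit state.} Put $p_y(u)=\Pp(|u+|u|^\alpha Z|>y)$ and $p_R(u)=\Pp(|u+|u|^\alpha Z|\ge R)$, and let $\mathsf G_R(v,du)$ be the killed Green measure. Since exit happens exactly once, $\int\mathsf G_R(v,du)\,p_R(u)=1$. Hence
\[
 \Pp_v(|V_{\tau_R}|>y)=\int\mathsf G_R(v,du)\,p_y(u)\le\sup_{|u|<R}\frac{p_y(u)}{p_R(u)},
\]
and no bound on the total Green mass is needed. The ratio tends to $0$ uniformly as $y\to\infty$:
\begin{itemize}
\item On a collar, $p_R$ is bounded below.
\item For small $|u|$, exiting on the side opposite to $u$ gives $p_R(u)\ge\overline\Phi((R+|u|)/|u|^\alpha)$. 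Compared with $p_y(u)\le2\overline\Phi((y-|u|)/|u|^\alpha)$ through the Mills bounds, this produces the factor $\exp\{-[(y-|u|)^2-(R+|u|)^2]/(2|u|^{2\alpha})\}$.
\end{itemize}
This is the paper's argument.

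Your own geometric-domination device also closes the gap, if you apply it to $S_m$ with the true (tiny) exit probability instead of looking for a uniform bound. Each visit to $S_m$ is followed by exit with probability at least $\inf_{S_m}p_R$, so $\E M_m\le 1/\inf_{S_m}p_R$. This is enormous, but your union bound gives
\[
 \Pp\bigl(|V_{\tau_R}|>y,\ V_{\tau_R-1}\in S_m\bigr)\le\frac{\sup_{S_m}p_y}{\inf_{S_m}p_R}.
\]
Up to a bounded factor, the right-hand side is at most $\exp\{-\tfrac12[(y-1)^2-2^{2\alpha}(R+1)^2]\,2^{2m\alpha}\}$. This is summable in $m$ and tends to $0$ as $y\to\infty$. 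What fails in your proposal is insisting on a uniform bound for $\E M_m$ itself.
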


The proofs are completed in Appendix A.  These propositions separate the microscopic trapping phase from the later macroscopic evolution: the process can spend a non-vanishing amount of physical time in a spatial region whose diameter tends to zero, yet it exits that region from a rescaled position bounded away from zero.

\section{Occupation near the degenerate point}
The quadratic variation of \eqref{eq:em} contains the lagged state $X_{\eta_h(s)}^h$, so we work directly with the Euler chain.

Set
\[
 s_h(x)=|x|^\alpha\sqrt h
\]
and
\[
 \mathcal R_h(x,a)=\E\bigl[|x+s_h(x)Z-a|-|x-a|\bigr].
\]
Here $\phi$ denotes the standard normal density and
\[
 \overline\Phi(q):=\int_q^\infty \phi(z)\,dz=1-\Phi(q)
\]
denotes the standard normal upper-tail function.  A Gaussian calculation gives
\[
 \mathcal R_h(x,a)=s_h(x)g\left(\frac{|x-a|}{s_h(x)}\right),
\]
where
\[
 g(q)=2\{\phi(q)-q\overline\Phi(q)\}>0.
\]
Conditional expectation and telescoping yield
\[
 \E\sum_{k=0}^{N-1}\mathcal R_h(Y_k^h,a)
 =\E|Y_N^h-a|-|x_h-a|.
\]

Choose once and for all $R_0>1$ so large that
\[
 2^\alpha R_0^{-(1-\alpha)}\le\frac14.
\]
For $\rho\ge R_0r_h$ and $\rho\le|x|<2\rho$, Appendix B proves the deterministic inequality
\begin{equation}\label{eq:localTanaka}
 h\le C\int_{\rho/2\le|a|\le3\rho}\mathcal R_h(x,a)|a|^{-2\alpha}\,da.
\end{equation}
Indeed, since $\sqrt h=r_h^{1-\alpha}$,
\[
 \frac{s_h(x)}{\rho}
 \le2^\alpha\left(\frac{r_h}{\rho}\right)^{1-\alpha}
 \le2^\alpha R_0^{-(1-\alpha)}\le\frac14.
\]
Thus only the local-window regime is needed: integrating over
$[x-s_h(x),x+s_h(x)]$ gives
\[
 s_h(x)^2\rho^{-2\alpha}
 =|x|^{2\alpha}h\rho^{-2\alpha}\ge c h.
\]
The lower cutoff $R_0r_h$ is essential; without it the displayed deterministic inequality need not hold uniformly in $h$ and $\rho$.

Summing \eqref{eq:localTanaka}, using the telescoping identity above, and integrating $|a|^{-2\alpha}$ over an interval of size $O(\rho)$ gives
\begin{equation}\label{eq:annulus}
 \E\left[h\sum_{k<N_h(T)}\1_{\{\rho\le|Y_k^h|<2\rho\}}\right]
 \le C_T\rho^{1-2\alpha}.
\end{equation}
This is the point at which $\alpha<1/2$ is essential.  Taking $R\ge2R_0$, a dyadic summation then controls the mesoscopic region $Rr_h\le|Y_k^h|<\varepsilon$; the smallest dyadic annulus still has lower radius at least $R_0r_h$.

The microscopic core requires a different argument because the expected number of steps in a single core excursion grows like $H(v)$.  Appendix B decomposes the path into successive entrances into and exits from $(-R,R)$ for the rescaled chain.  The first excursion is bounded by $hG_R(v_h)\le C_R(h+\Theta_h)$.  Later excursions are charged to the state immediately before entrance; returns from a bounded collar are controlled by the same discrete Tanaka remainder, while returns from the far field have a Gaussian cost.  This yields
\begin{equation}\label{eq:corebound}
 \E\left[h\sum_{k<N_h(T)}\1_{\{|Y_k^h|<Rr_h\}}\right]
 \le C_R(h+\Theta_h)+C_{T,R}\frac h{r_h}+C_{T,R}e^{-c_R/h}.
\end{equation}
Since
\[
 \frac h{r_h}=h^{(1-2\alpha)/(2(1-\alpha))}\to0,
\]
\eqref{eq:annulus} and \eqref{eq:corebound} imply the grid occupation estimate.  The continuous interpolation is recovered from
\[
 D_h(T)=\sup_{t\le T}|X_t^h-X_{\eta_h(t)}^h|\to0
\]
in probability.  This proves Theorem \ref{thm:occupation}; full details are in Appendix B.

\section{Identification of the non-sticky limit}
Uniform moment estimates and BDG give, for $p\ge2$, for some $h_0>0$,
\[
 \sup_{0<h\le h_0}\E\sup_{t\le T}|X_t^h|^p<\infty
\]
and
\[
 \E|X_t^h-X_s^h|^p\le C_{p,T}|t-s|^{p/2}.
\]
Since $p>2$ may be chosen, the increment estimate and Kolmogorov--Chentsov tightness criterion imply tightness in $C([0,T])$ for every fixed $T$.  Hence the laws are tight on every compact time interval.

For every $p>2$,
\begin{equation}\label{eq:mesh}
 \E[D_h(T)^p]\le C_{p,T}h^{p/2-1}.
\end{equation}
Because $2\alpha<1$,
\[
 \bigl||x|^{2\alpha}-|y|^{2\alpha}\bigr|\le|x-y|^{2\alpha},
\]
and \eqref{eq:mesh} implies
\[
 \E\int_0^T\left||X_{\eta_h(s)}^h|^{2\alpha}-|X_s^h|^{2\alpha}\right|ds\to0.
\]
Every subsequential limit therefore solves the martingale problem for
\[
 \mathcal Lf(x)=\frac12|x|^{2\alpha}f''(x),
\]
and hence is a weak solution of \eqref{eq:sde}.

Under $\Theta_h\to0$, Theorem \ref{thm:occupation} transfers to any weak limit and gives zero Lebesgue occupation at the origin.  The non-sticky law is unique.  For completeness, Appendix C gives the Brownian time-change proof: with
\[
 A_u=\int_0^u\1_{\{B_s\ne0\}}|B_s|^{-2\alpha}\,ds
\]
and inverse $T$, the process $B_{T_t}$ is the non-sticky weak solution, and every non-sticky weak solution has this law by the Dambis--Dubins--Schwarz theorem \citep{Dambis1965,DubinsSchwarz1965}.

\begin{lemma}[Sequential subcritical convergence]\label{lem:sequential-subcritical}
Let $h_n\downarrow0$ and let $x_n\ne0$ satisfy $x_n\to0$ and
\[
 h_nH(x_n/r_{h_n})\longrightarrow0.
\]
Then the Euler approximation with mesh $h_n$ and initialization $x_n$
converges to $X^{\ns}$ in $C_{\loc}([0,\infty))$.
\end{lemma}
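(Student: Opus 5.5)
The proof follows the standard scheme of tightness, identification of the limit, and uniqueness, carried out along the given sequence. First I establish tightness in $C_{\loc}([0,\infty))$. The uniform moment bound and the increment estimate $\E|X^n_t-X^n_s|^p\le C_{p,T}|t-s|^{p/2}$ from Section 6 hold for the sequence $(h_n,x_n)$, since $\sup_n|x_n|<\infty$. By Kolmogorov--Chentsov, the laws of $X^n$ restricted to $[0,m]$ are then tight for every $m$. A diagonal argument upgrades this to tightness in $C_{\loc}([0,\infty))$, because the metric $d_{\loc}$ only sees finitely many compacts up to $2^{-m}$. Fix an arbitrary subsequence. Extract a further subsequence with $X^{n}\Rightarrow X$ and, by Skorokhod representation, realize the convergence almost surely in $d_{\loc}$.

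Next I identify the limit as a weak solution. For $f\in C_c^\infty(\R)$, Itô's formula applied to the continuous interpolation \eqref{eq:em} shows that
\[
f(X^n_t)-f(x_n)-\frac12\int_0^t|X^n_{\eta_{h_n}(s)}|^{2\alpha}f''(X^n_s)\,ds
\]
is a martingale. The mesh estimate \eqref{eq:mesh}, together with $\bigl||x|^{2\alpha}-|y|^{2\alpha}\bigr|\le|x-y|^{2\alpha}$, lets me replace the lagged coefficient by $|X^n_s|^{2\alpha}$ with an $L^1$ error tending to zero. Testing against bounded continuous functionals of the path up to time $s$ and passing to the limit, using uniform integrability from the moment bounds, shows that $X$ solves the martingale problem for $\mathcal L$ started from $0$. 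Hence $X$ is a weak solution of \eqref{eq:sde}.

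The third step, which I expect to be the main technical point, is to transfer zero occupation at the origin to the limit. For $0<\varepsilon\le1$ the set $\{x:|x|<\varepsilon\}$ is open. Almost sure local-uniform convergence together with Fatou's lemma gives $\1_{\{|X_t|<\varepsilon\}}\le\liminf_n\1_{\{|X^n_t|<\varepsilon\}}$ pointwise. Therefore
\[
\E\int_0^T\1_{\{X_t=0\}}dt\le\E\int_0^T\1_{\{|X_t|<\varepsilon\}}dt\le\limsup_{n}\E\int_0^T\1_{\{|X^n_t|<\varepsilon\}}dt\le C_T\varepsilon^{1-2\alpha},
\]
where the last inequality is Theorem \ref{thm:occupation}, applied along the subsequence; its hypothesis $\Theta_n\to0$ is inherited by subsequences. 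Letting $\varepsilon\downarrow0$ and using $\alpha<1/2$ gives zero Lebesgue time at $0$ for every $T$. The care needed here is only in the direction of the semicontinuity and in using the open set $\{|x|<\varepsilon\}$ rather than $\{0\}$. The genuine difficulty, namely the occupation estimate itself, has already been handled in Theorem \ref{thm:occupation}.

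Finally, every non-sticky weak solution started at $0$ has the law of $X^{\ns}$, by the time-change characterization of Appendix C via Dambis--Dubins--Schwarz. So every subsequence has a further subsequence converging to $\mathcal L(X^{\ns})$. This subsequence principle yields $X^n\Rightarrow X^{\ns}$ in $C_{\loc}([0,\infty))$.
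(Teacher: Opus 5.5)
Your proposal is correct and follows essentially the same route as the paper. Tightness and the martingale-problem identification come from the uniform moment and mesh bounds, which need only $h_n\downarrow0$ and bounded $x_n$. Theorem~\ref{thm:occupation} is then transferred to every subsequential limit to give zero occupation at the origin. Uniqueness of the non-sticky law via the time change of Appendix~\ref{app:identification} finishes the argument through the subsequence principle.

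The only difference is cosmetic. You pass the occupation bound to the limit via Skorokhod representation, lower semicontinuity of the indicator of the open set $\{|x|<\varepsilon\}$, and Fatou. The paper instead uses a continuous cutoff $\psi_\varepsilon$ and weak convergence directly. Both are valid.
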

\begin{proof}
Tightness and the martingale-problem identification
(Appendix~\ref{app:identification}) use only $h_n\downarrow0$ and the boundedness of
$(x_n)$. The occupation estimate is supplied by
Theorem~\ref{thm:occupation}, which is stated for sequences. Hence every
subsequential limit is a non-sticky weak solution, whose law is unique by
Appendix~\ref{app:identification}.
\end{proof}

\begin{theorem}\label{thm:subcritical}
If $\Theta_h\to0$, then
\[
 X^h\Rightarrow X^{\ns}
\]
in $C_{\loc}([0,\infty))$.
\end{theorem}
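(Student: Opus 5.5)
The theorem follows from Lemma~\ref{lem:sequential-subcritical} by a standard subsequence argument in the metric space $\mathcal P(C_{\loc}([0,\infty)))$. Weak convergence on $C_{\loc}([0,\infty))$ is metrizable; for instance, the bounded-Lipschitz metric built from $d_{\loc}$ metrizes it, since $(C_{\loc}([0,\infty)),d_{\loc})$ is Polish. It therefore suffices to prove the following: for every sequence $h_n\downarrow0$, the laws $\mathcal L(X^{h_n})$ have a further subsequence converging to $\mathcal L(X^{\ns})$. Convergence of the family indexed by $h\downarrow0$ then follows, because a family in a metric space converges to a point precisely when every sequence along $h_n\to 0$ has a subsequence converging to that point.

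Fix an arbitrary sequence $h_n\to0$ with $h_n>0$, and pass to a monotone subsequence with $h_n\downarrow0$. Set $x_n:=x_{h_n}$. Then $x_n\ne0$ and $x_n\to0$, because $x_h\to0$ as $h\downarrow0$. The hypothesis $\Theta_h\to0$ gives $h_nH(x_n/r_{h_n})=\Theta_{h_n}\to0$. These are exactly the hypotheses of Lemma~\ref{lem:sequential-subcritical}. Moreover, the Euler approximation with mesh $h_n$ and initialization $x_n$ is $X^{h_n}$, since \eqref{eq:em} is determined by the pair $(h,x_h)$. The lemma therefore gives $X^{h_n}\Rightarrow X^{\ns}$ along the full (sub)sequence, and this is more than is needed.

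The one point that needs care is that $\{X^h\}$ is a family indexed by a continuum and need not be monotone or countable. The argument must therefore go through arbitrary sequences rather than a single one, and the sequential form of Theorem~\ref{thm:occupation} and Lemma~\ref{lem:sequential-subcritical} is designed for exactly this. I would also note explicitly that $\sup_n|x_n|<\infty$ holds, because $x_n\to0$; this means the constant $C_T$ in Theorem~\ref{thm:occupation} and the moment bounds used for tightness are uniform along the chosen sequence. I do not expect any genuine obstacle: all the analytic content (tightness, identification of limits through the martingale problem, the occupation estimate, and uniqueness of the non-sticky law) is already contained in Lemma~\ref{lem:sequential-subcritical}.
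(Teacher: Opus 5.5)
Your proposal is correct and matches the paper in substance: the paper also derives Theorem~\ref{thm:subcritical} from the sequential identification packaged in Lemma~\ref{lem:sequential-subcritical}, namely tightness, the martingale problem, Theorem~\ref{thm:occupation}, and uniqueness of the non-sticky law. The only difference is bookkeeping: the paper states finite-horizon convergence and upgrades it with Proposition~\ref{prop:cloc}, whose proof likewise runs over arbitrary sequences $h_n\downarrow0$, whereas you take the lemma's $C_{\loc}$ conclusion as given and pass from sequences to the family by metrizability of weak convergence on $\mathcal P(C_{\loc}([0,\infty)))$.
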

The preceding argument gives the convergence on every finite horizon; the standard local-uniform upgrade used here is stated and proved as Proposition \ref{prop:cloc} below.

We also need uniformity after microscopic exit.

\begin{proposition}[Compact-uniform restart]\label{prop:restart}
Fix $0<R<M<\infty$ and $T<\infty$.  Let $X^{h,v}$ be the Euler approximation started from $r_hv$.  For every bounded continuous $F:C([0,T])\to\R$,
\[
 \sup_{R\le|v|\le M}\left|\E F(X^{h,v})-\E F(X^{\ns})\right|\to0.
\]
\end{proposition}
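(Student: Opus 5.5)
We argue by contradiction combined with sequential compactness, and reduce everything to Lemma~\ref{lem:sequential-subcritical}. Suppose the uniform convergence fails. Then there exist $\eta>0$, a sequence $h_n\downarrow0$ and points $v_n$ with $R\le|v_n|\le M$ such that
\[
 \left|\E F(X^{h_n,v_n})-\E F(X^{\ns})\right|\ge\eta\qquad\text{for all }n.
\]
Put $x_n=r_{h_n}v_n$. Since $|v_n|\le M$ and $r_{h_n}\to0$, we have $x_n\ne0$ and $x_n\to0$. The relevant depth parameter is
\[
 h_nH(x_n/r_{h_n})=h_nH(v_n).
\]
Because $|v_n|\ge R$, we get $\log^+(1/|v_n|)\le\log^+(1/R)$. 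Hence $H(v_n)\le\log(1+\log^+(1/R))$ is bounded uniformly in $n$, and therefore $h_nH(v_n)\to0$. The sequence $(h_n,x_n)$ thus satisfies exactly the hypotheses of Lemma~\ref{lem:sequential-subcritical}.

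That lemma yields $X^{h_n,v_n}\Rightarrow X^{\ns}$ in $C_{\loc}([0,\infty))$. The restriction map $C_{\loc}([0,\infty))\to C([0,T])$ is continuous, so by the continuous mapping theorem $X^{h_n,v_n}|_{[0,T]}\Rightarrow X^{\ns}|_{[0,T]}$ in $C([0,T])$. Consequently $\E F(X^{h_n,v_n})\to\E F(X^{\ns})$, which contradicts the choice of $\eta$. This proves the proposition.

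The only point needing care is that Lemma~\ref{lem:sequential-subcritical} must genuinely apply to sequences whose initial points vary with $n$ and are not of a fixed form. This is the reason it was stated sequentially. Its proof uses the following ingredients:
\begin{itemize}[label=--]
\item tightness, which requires only $\sup_n|x_n|<\infty$;
\item the martingale-problem identification, which has no dependence on the initial point beyond $x_n\to0$;
\item Theorem~\ref{thm:occupation}, which is also sequential and whose constant depends only on $T$, $\alpha$ and $\sup_n|x_n|$.
\end{itemize}
All three hold here, so no uniformity in $v$ has to be rebuilt by hand.

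If one preferred not to go through the $C_{\loc}$ statement, the argument could be run directly on $[0,T]$ with the same three ingredients. The occupation bound \eqref{eq:corebound} then involves $C_R(h_n+h_nH(v_n))\to0$, and the microscopic core contributes nothing in the limit.

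I expect no real obstacle. The step most worth double-checking is that the constants in Theorem~\ref{thm:occupation} really do not depend on the sequence $(x_n)$ beyond $\sup_n|x_n|$ and the convergence $\Theta_n\to0$.
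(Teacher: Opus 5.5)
Your proof is correct and follows the paper's argument: you argue by contradiction along a sequence $h_n\downarrow0$, use the triangular initialization $x_n=r_{h_n}v_n$ with $h_nH(v_n)\to0$ (since $H$ is bounded on $\{R\le|v|\le M\}$), and appeal to Lemma~\ref{lem:sequential-subcritical}. The only difference is cosmetic: the paper first extracts a subsequence with $v_n\to v_*$, which, as your version shows, is not actually needed.
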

\begin{proof}
Suppose the conclusion fails.  Then there are $\varepsilon>0$, a sequence
$h_n\downarrow0$, and $v_n$ with $R\le|v_n|\le M$ such that
\[
 \left|\E F(X^{h_n,v_n})-\E F(X^{\ns})\right|\ge\varepsilon
 \qquad(n\ge1).
\]
After passing to a subsequence, $v_n\to v_*$ with $R\le|v_*|\le M$.
For the triangular initialization $x_n=r_{h_n}v_n$ we have $x_n\to0$ and
\[
 h_n H(v_n)\longrightarrow0,
\]
because $H$ is bounded on $\{v:R\le|v|\le M\}$.  Lemma \ref{lem:sequential-subcritical} therefore applies to the triangular
initialization $x_n=r_{h_n}v_n$, and hence
\[
 X^{h_n,v_n}\Rightarrow X^{\ns}
 \qquad\text{in }C([0,T]).
\]
Since $F$ is bounded and continuous, the corresponding expectations converge,
contradicting the preceding lower bound.  Hence the convergence is uniform on
the compact set of rescaled initial states.
\end{proof}

\section{Critical and supercritical selection}
We now prove Theorem \ref{thm:selection}.  Fix $R>0$ and write $S_h^R=h\tau_R^h$.

\subsection{Subcritical regime}
This is Theorem \ref{thm:subcritical}.

\subsection{Critical regime}
Assume
\[
 \Theta_h\to\theta\in(0,\infty),\qquad d_\theta=\theta/c_\alpha.
\]
Theorem \ref{thm:exitLLN} gives
\begin{equation}\label{eq:Shcritical}
 S_h^R\to d_\theta
\end{equation}
in probability.  Proposition \ref{prop:preexit} shows that the path is uniformly negligible before $S_h^R$ on compact horizons, while Proposition \ref{prop:overshoot} shows that the exit position is $r_h$ times a tight rescaled variable.

We prove convergence separately according to the position of the observation
horizon relative to $d_\theta$.  If $T<d_\theta$, choose
$\eta>0$ such that $T+\eta<d_\theta$.  By \eqref{eq:Shcritical},
\[
 \Pp(S_h^R>T)\longrightarrow1.
\]
Consequently Proposition \ref{prop:preexit} gives
\[
 \sup_{0\le t\le T}|X_t^h|\longrightarrow0
\]
in probability.  This is exactly the restriction of $X^{(d_\theta)}$ to
$[0,T]$.

It remains to consider $T>d_\theta$.  Define the concatenation map
\[
 \Gamma_T(s,f)(t)=\begin{cases}
 f(0),&0\le t\le s,\\
 f(t-s),&s<t\le T,
 \end{cases}
 \qquad 0\le s\le T.
\]
In particular, if $f(0)=0$, this agrees with the delayed-path convention in
\eqref{eq:delay}.
After the stopping time $S_h^R$, the strong Markov property of Brownian
increments restarts the Euler scheme from
\[
 X_{S_h^R}^h=r_h\widehat V_h,
 \qquad \widehat V_h=V_{\tau_R^h}^h.
\]
By overshoot tightness, for every $\varepsilon>0$ one can choose $M$ so that
\[
 \limsup_{h\downarrow0}\Pp(|\widehat V_h|>M)<\varepsilon.
\]
Moreover $|\widehat V_h|\ge R$ by definition of the exit time.  Let
$\widehat X^h$ denote the restarted Euler path, shifted so that its new time
origin is zero.  On $R\le|\widehat V_h|\le M$, Proposition
\ref{prop:restart} applies uniformly.

To spell out the resulting joint convergence, let $\Psi$ be bounded and
Lipschitz on $[0,T]\times C([0,T])$, and define
\[
 g(s):=\E\Psi(s,X^{\ns}),\qquad 0\le s\le T.
\]
Since $\Psi$ is Lipschitz, $|g(s)-g(s')|\le\operatorname{Lip}(\Psi)|s-s'|$; in particular $g$ is bounded and continuous. Since $d_\theta<T$,
\eqref{eq:Shcritical} implies $\Pp(S_h^R>T)\to0$. Conditioning at $S_h^R$
and using the independence of the future Brownian increments, together with
the compact-uniform restart estimate on $R\le|\widehat V_h|\le M$, gives
\begin{align*}
 &\left|\E\Psi(S_h^R\wedge T,\widehat X^h)
       -\E g(S_h^R\wedge T)\right|\\
 &\qquad\le o(1)
   +2\|\Psi\|_\infty\Pp(|\widehat V_h|>M)
   +2\|\Psi\|_\infty\Pp(S_h^R>T),
\end{align*}
where the $o(1)$ is uniform for $R\le|\widehat V_h|\le M$.  To justify also the uniformity in the first argument of $\Psi$, choose a finite mesh $s_1,\ldots,s_J$ of $[0,T]$ and let
$\delta_J=\sup_{s\in[0,T]}\min_j|s-s_j|$.  Then
\begin{align*}
 &\sup_{s\in[0,T]}\sup_{R\le|v|\le M}
 \left|\E\Psi(s,X^{h,v})-\E\Psi(s,X^{\ns})\right|\\
 &\qquad\le 2\operatorname{Lip}(\Psi)\delta_J
 +\max_{1\le j\le J}\sup_{R\le|v|\le M}
 \left|\E\Psi(s_j,X^{h,v})-\E\Psi(s_j,X^{\ns})\right|.
\end{align*}
The first term can be made arbitrarily small by refining the mesh, while the second tends to zero by Proposition \ref{prop:restart}.  First letting $h\downarrow0$ and then
$M\uparrow\infty$, and using $S_h^R\to d_\theta$ in probability, gives
\begin{equation}\label{eq:jointrestart}
 (S_h^R\wedge T,\widehat X^h)\Rightarrow(d_\theta,X^{\ns}).
\end{equation}
Here the second component on the right may be taken independent of the
constant first component; no independence assertion about the two prelimit
components is required.

Notice first that $\Gamma_T(s,f)\in C([0,T])$ for every
$f\in C([0,T])$: at the concatenation time $t=s$, both one-sided values
equal $f(0)$.  In fact, writing
$\Gamma_T(s,f)(t)=f((t-s)^+)$ and denoting by $\omega_f$ the modulus of
continuity of $f$, we have
\[
 \|\Gamma_T(s_n,f_n)-\Gamma_T(s,f)\|_{[0,T]}
 \le \|f_n-f\|_{[0,T]}+\omega_f(|s_n-s|).
\]
Thus $\Gamma_T$ is continuous on $[0,T]\times C([0,T])$ (with the
uniform topology on the second factor), in particular at
$(d_\theta,X^{\ns})$.
The actual path before $S_h^R$ is not constant at its exit value, but
Proposition \ref{prop:preexit} shows that this discrepancy is negligible:
with the natural concatenated path
$\widetilde X^h=\Gamma_T(S_h^R\wedge T,\widehat X^h)$,
\[
 \|X^h-\widetilde X^h\|_{[0,T]}
 \le
 2\sup_{0\le t\le S_h^R\wedge T}|X_t^h|
 +|X_{S_h^R}^h|\,\1_{\{S_h^R>T\}}
 \longrightarrow0
\]
in probability, by Proposition \ref{prop:overshoot} for the extra term.
On the other hand, \eqref{eq:jointrestart} and the
continuous mapping theorem give
\[
 \widetilde X^h
 =\Gamma_T(S_h^R\wedge T,\widehat X^h)
 \Rightarrow
 \Gamma_T(d_\theta,X^{\rm ns})
 =X^{(d_\theta)}
 \qquad\text{in }C([0,T]).
\]
Since $X^h$ and $\widetilde X^h$ are defined on the same prelimit
probability space and their uniform distance converges to zero in
probability, Slutsky's theorem therefore yields
\[
 X^h\Rightarrow X^{(d_\theta)}
 \qquad\text{in }C([0,T]),\qquad T>d_\theta.
\]
Finally, when $T=d_\theta$, choose any $T'>d_\theta$, apply the preceding
argument on $[0,T']$, and then use continuity of the restriction map
$C([0,T'])\to C([0,T])$.  Thus the critical convergence holds on every
finite horizon.

\subsection{Supercritical regime}
Assume $\Theta_h\to\infty$.  For every fixed $T$ and $R$,
\[
 \Pp(S_h^R\le T)\to0.
\]
On the complementary event, Proposition \ref{prop:preexit} gives
\[
 \sup_{t\le T}|X_t^h|\to0
\]
in probability.  Hence
\[
 X^h\to0
\]
in probability in $C([0,T])$ for every $T$.

\subsection{Local-uniform topology}
\begin{proposition}\label{prop:cloc}
Suppose that for every integer $m\ge1$, the restrictions $X^h|_{[0,m]}$ converge weakly to $X|_{[0,m]}$ in $C([0,m])$.  Then
\[
 X^h\Rightarrow X
\]
in $C_{\loc}([0,\infty))$.
\end{proposition}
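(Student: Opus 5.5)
We prove Proposition \ref{prop:cloc} in two steps: first tightness in $C_{\loc}([0,\infty))$, then identification of the limit through finite-horizon restrictions.

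For $m\ge1$ let $\pi_m:C_{\loc}([0,\infty))\to C([0,m])$ be the restriction map. It is continuous, since $\|f-g\|_{[0,m]}\le 2^m d_{\loc}(f,g)$ whenever $d_{\loc}(f,g)<2^{-m}$.

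\textbf{Tightness.} Fix $\varepsilon>0$. For each $m$, the family $\{\mathcal L(X^h|_{[0,m]})\}$ converges weakly on the Polish space $C([0,m])$, so it is tight along sequences $h_n\downarrow0$ by Prokhorov's theorem. Choose a compact $K_m\subset C([0,m])$ with $\Pp(X^h|_{[0,m]}\notin K_m)\le\varepsilon2^{-m}$ for all $h$ in the sequence. Enlarging $K_m$ to cover the finitely many early terms costs nothing, because single laws on a Polish space are tight. Set $K=\bigcap_m\pi_m^{-1}(K_m)$. By Arzel\`a--Ascoli on each $[0,m]$, together with a diagonal argument, $K$ is compact in $C_{\loc}([0,\infty))$. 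Since $\Pp(X^h\notin K)\le\varepsilon$, the family of laws is tight.

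\textbf{Identification.} Let $Y$ be any subsequential weak limit in $C_{\loc}([0,\infty))$. By the continuous mapping theorem, $Y|_{[0,m]}$ is the weak limit of $X^h|_{[0,m]}$ along that subsequence, so $Y|_{[0,m]}\overset{d}{=}X|_{[0,m]}$ for every $m$.

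The Borel $\sigma$-field of $C_{\loc}([0,\infty))$ is generated by the coordinate maps $f\mapsto f(t)$. Hence the finite-dimensional distributions determine the law, and those distributions are already determined by the restrictions. Therefore $Y\overset{d}{=}X$.

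Tightness together with uniqueness of the subsequential limit gives $X^h\Rightarrow X$. This holds for every sequence $h_n\downarrow0$, hence for the whole family.

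An alternative to the tightness step is a direct estimate. For bounded Lipschitz $F$ on $C_{\loc}([0,\infty))$, compare $F(f)$ with $F(f^{(m)})$, where $f^{(m)}(t)=f(t\wedge m)$. The error is at most $\mathrm{Lip}(F)\,2^{-m}$, uniformly in $f$. Moreover $f\mapsto f^{(m)}$ factors continuously through $\pi_m$. So $\E F(X^h)\to\E F(X)$ up to an error of order $2^{-m}$, and letting $m\to\infty$ concludes the argument. This version avoids the compactness bookkeeping, and it is the one I would write.

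The only step requiring any care is the uniform comparison bound $d_{\loc}(f,f^{(m)})\le\sum_{j>m}2^{-j}=2^{-m}$. It holds because $f$ and $f^{(m)}$ agree on $[0,m]$, so every summand with index $j\le m$ vanishes.
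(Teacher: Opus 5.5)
Your first argument is essentially the paper's proof: compact sets $K_m$ enlarged to absorb finitely many initial terms, compactness of $K=\bigcap_m\pi_m^{-1}(K_m)$ by a diagonal argument, and uniqueness of subsequential limits because the restriction maps generate the Borel $\sigma$-field.

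The stopped-path argument you say you would actually write is a genuinely different route, and it is correct. Write $\iota_m g=g(\cdot\wedge m)$ for $g\in C([0,m])$. Then $d_{\loc}(\iota_m g,\iota_m g')\le\|g-g'\|_{[0,m]}$, so $F\circ\iota_m$ is bounded and continuous on $C([0,m])$. This estimate is the precise content of ``factors continuously through $\pi_m$'', and you should state it. Combined with $d_{\loc}(f,\iota_m\pi_m f)\le2^{-m}$, it gives $\limsup_{h\downarrow0}|\E F(X^h)-\E F(X)|\le2\operatorname{Lip}(F)\,2^{-m}$ for every bounded $d_{\loc}$-Lipschitz $F$. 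Letting $m\to\infty$ and invoking the Portmanteau theorem (bounded Lipschitz test functions characterize weak convergence on any metric space) finishes the proof.

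Compared with the paper, your route avoids several steps:
\begin{itemize}
\item both directions of Prokhorov's theorem;
\item the diagonal compactness of $K$;
\item the uniqueness-of-measures step.
\end{itemize}
It also works directly for the family $h\downarrow0$ without passing to sequences, and it gives an explicit error bound. In exchange it uses the concrete tail structure of $d_{\loc}$ and the continuous extension map $\iota_m$. This is harmless, since weak convergence depends only on the topology.

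The paper's tightness-plus-identification template is purely topological. As a by-product, it shows that tightness of all finite-horizon restrictions already implies tightness in $C_{\loc}([0,\infty))$.
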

\begin{proof}
Take an arbitrary sequence $h_n\downarrow0$.  For each $m$, tightness of the restrictions gives a compact set $K_m\subset C([0,m])$ with complement probability at most $\varepsilon2^{-m}$, after enlarging $K_m$ to absorb finitely many exceptional initial terms.  The set
\[
 K=\bigcap_{m\ge1}\pi_m^{-1}(K_m)
\]
is compact in the local-uniform topology by a diagonal Arzel\`a--Ascoli argument, and a union bound gives tightness.  Any subsequential limit has all finite-horizon restrictions equal in law to those of $X$.  Since the Borel sigma-field is generated by the restriction maps, the limit law is uniquely determined.
\end{proof}

The subcritical convergence of Theorem \ref{thm:subcritical} and the finite-horizon critical convergence proved above therefore yield the corresponding assertions in $C_{\loc}([0,\infty))$ by Proposition \ref{prop:cloc}.  In the supercritical regime we retain the stronger mode of convergence stated in Theorem \ref{thm:selection}.  Indeed, for every integer $M\ge1$,
\[
 \|X^h\|_{[0,M]}\longrightarrow0
 \qquad\text{in probability}.
\]
Given $\varepsilon>0$, choose $M$ so large that $\sum_{m>M}2^{-m}<\varepsilon/2$.  Then, by the definition of $d_{\loc}$,
\[
 \Pp\bigl(d_{\loc}(X^h,0)>\varepsilon\bigr)
 \le
 \Pp\left(\sum_{m=1}^{M}2^{-m}(1\wedge\|X^h\|_{[0,m]})>\varepsilon/2\right),
\]
and the probability on the right tends to zero because the finite sum converges to zero in probability.  Hence $X^h\to0$ in probability in $C_{\loc}([0,\infty))$.

\subsection{Prescribed delays and cluster sets}
For $d>0$, take
\[
 x_h=\exp\{-\exp(dc_\alpha/h)\}.
\]
The polynomial factor $r_h$ is negligible after taking two logarithms, and hence
\[
 \Theta_h\to dc_\alpha.
\]
For $d=0$, take for example $x_h=h^\beta$, $\beta>0$; then $\Theta_h\to0$.  This proves Corollary \ref{cor:everydelay}.

For an arbitrary initialization sequence, every subsequence of $\Theta_h$ has a further subsequence converging in the compactified interval $[0,\infty]$.  The three parts of Theorem \ref{thm:selection} identify the corresponding process limit.  Conversely every cluster point of $\Theta_h$ is realized by a subsequence.

It remains only to note that different delay parameters give different laws on $C_{\loc}([0,\infty))$.  Let $0\le d_1<d_2<\infty$ and choose $t\in(d_1,d_2)$.  With
\[
 F_t(f)=\sup_{0\le s\le t}|f(s)|,
\]
we have $F_t(X^{(d_2)})=0$ almost surely.  On the other hand,
\[
 \Pp\bigl(F_t(X^{(d_1)})>0\bigr)>0,
\]
because after time $d_1$ the process has the non-sticky law, whose zero set has Lebesgue measure zero almost surely and hence cannot contain an interval of positive length.  Hence $\mathcal L(X^{(d_1)})\ne\mathcal L(X^{(d_2)})$.  The absorbing law $\delta_0$ is likewise distinct from every finite-delay law.  This proves Corollary \ref{cor:cluster} and the stated convergence criterion.  In particular, two different subsequential limits of $\Theta_h$ produce different process laws and hence nonconvergence of the full Euler sequence.

\subsection{Relation to the earlier non-sticky convergence statement}
The present example also isolates the role of initialization in the convergence statement of \citet{TaguchiTanaka2019}.  Three points are worth separating.

First, the Euler scheme in \citet{TaguchiTanaka2019}, equation (9.4) of the published version, is initialized at $x_n\notin Z(\sigma)$ with $x_n\to x_0$, without a quantitative rate restriction. Here the limiting initial value $x_0$ is allowed to belong to $Z(\sigma)$. Theorem 9.1 then asserts weak convergence to the non-sticky law.  In the present power-law model this is not sufficient for such convergence.  Indeed, for any $0<d<T$, choosing an initialization with $\Theta_h\to dc_\alpha$ gives, by Theorem \ref{thm:selection}, the delayed limit $X^{(d)}$ rather than $X^{\ns}$ on $[0,T]$.  Thus the present model supplies an explicit counterexample to the unrestricted initialization-independent conclusion of Theorem 9.1 under merely $x_h\ne0$ and $x_h\to0$; the power-law coefficient itself is the setting of Corollary 9.1 in that paper.

Second, the precise obstruction in the earlier occupation step is that the continuous Euler interpolation has quadratic variation density $|X_{\eta_h(s)}^h|^{2\alpha}$, whereas the desired singular occupation estimate is expressed in the current state $X_s^h$.  The continuous occupation-density formula therefore does not supply the required weight $|X_s^h|^{-2\alpha}$ for the lagged-state quantity without an additional comparison that is unavailable near the degeneracy.  The non-sticky conclusion is nevertheless recovered in the subcritical regime $\Theta_h\to0$: Theorem \ref{thm:occupation}, proved through the discrete Tanaka estimate and Lemma \ref{lem:core-occupation}, replaces that step.  Combined with the martingale-problem and time-change arguments of Section~6 and Appendix~C, it yields $X^h\Rightarrow X^{\ns}$.

Third, the subcritical condition is sharp at the level of the selection parameter.  Finite positive limits of $\Theta_h$ produce finite deterministic delays, while $\Theta_h\to\infty$ produces the absorbing limit.  Hence the rate condition is not introduced merely to repair an estimate: it identifies the boundary between genuinely different numerical limits.  Thus, in the present power-law model, the result identifies the rate condition missing from the unrestricted initialization-independent conclusion and explains the phenomenon that the earlier occupation argument could not detect.

\section{Conclusion}
We have shown that the Euler--Maruyama approximation of
\[
 dX_t=|X_t|^\alpha dW_t,\qquad 0<\alpha<\frac12,
\]
exhibits an initialization-dependent solution-selection phase transition.  For deterministic vanishing initialization sequences converging to the degenerate point zero, this phase transition is completely characterized by the depth parameter $\Theta_h$.  The critical spatial scale is $r_h=h^{1/[2(1-\alpha)]}$, while the effective selector is the logarithmic depth
\[
 \Theta_h=h\log\left(1+\log^+\frac{r_h}{|x_h|}\right).
\]
Subcritical initialization selects the non-sticky solution, critical initialization leaves a finite deterministic holding time, and supercritical initialization selects the absorbing solution.  Every finite deterministic delay can be generated, and oscillating depths can destroy convergence at process level.

The mechanism separates two effects that are invisible if one only records $x_h\to0$: the scale-invariant microscopic Euler chain determines how many numerical steps are required to escape the degeneracy, while the factor $h$ converts that number of steps into physical time.  The discrete Tanaka estimate then controls repeated returns and identifies the non-sticky dynamics after escape.

The exact scale invariance used in the analysis is specific to the power-law coefficient.  At the same time, absolute-power coefficients have arisen in zero-crossing diffusion models, including free-boundary CEV/SABR-type specifications.  The selection phenomenon studied here therefore concerns the numerical treatment of a degenerate boundary rather than a feature tied to a particular application.  It also suggests a broader question: for coefficients whose local behavior near a degenerate point is comparable, in an explicitly quantified sense, to $c|x|^\alpha$, does an analogous depth variable govern numerical solution selection?  Establishing such a result would require stability of both the microscopic exit law and the discrete occupation estimates under perturbations of the coefficient.

\appendix
\section{Exit-time estimates for the rescaled chain}\label{app:exit}
Consider
\[
 V_{k+1}=V_k+|V_k|^\alpha Z_{k+1},\qquad 0<\alpha<1.
\]
For $R>0$, let $\tau_R(v)=\inf\{k\ge0:|V_k|\ge R\}$.

\subsection{One-step logarithmic representation}
Write $v=\xi e^{-L}$, $\xi\in\{-1,1\}$.  Then
\[
 V_1=e^{-\alpha L}\left(Z_1+\xi e^{-(1-\alpha)L}\right).
\]
Set
\[
 U_L^{(\xi)}=-\log|Z_1+\xi e^{-(1-\alpha)L}|.
\]
A shifted Gaussian density gives, uniformly for sufficiently large $L$,
\[
 \Pp(U_L^{(\xi)}>u)\le Ce^{-u},\qquad
 \Pp(U_L^{(\xi)}<-u)\le Ce^{-ce^{2u}}.
\]
Hence all polynomial moments are uniformly bounded and
\[
 U_L^{(\xi)}\to-\log|Z_1|
\]
in $L^q$ for every finite $q$.

The chain started from a nonzero point hits zero at a grid time with probability zero, because every one-step transition from a nonzero point has a density.

\subsection{Depth drift}
For $H(v)=\log(1+\log^+(1/|v|))$, $v\ne0$,
\[
 H(V_1)-H(v)\to\log\alpha
\]
in $L^2$, uniformly in the sign of $v$, as $v\to0$.  To justify the $L^2$ statement, write $L=\log(1/|v|)$ and $U_L=U_L^{(\xi)}$.  On $\{\alpha L+U_L>0\}$,
\[
 H(V_1)-H(v)=\log\frac{1+\alpha L+U_L}{1+L}.
\]
On $\{|U_L|\le\alpha L/2\}$ a Taylor bound, together with the uniform moment bounds above, gives convergence in $L^2$ to $\log\alpha$.  The complementary event is negligible by the two tail estimates: in particular $\Pp(\alpha L+U_L\le0)\le C\exp\{-ce^{2\alpha L}\}$, which absorbs the possible factor $\log^2(1+L)$, while the positive tail is exponentially small.  The same estimates give uniform integrability of the squared logarithmic increment.  Therefore, for every $\delta>0$, there exists $r_\delta>0$ such that for $0<|v|\le r_\delta$,
\[
 \left|\E_v[H(V_1)-H(v)]+c_\alpha\right|\le\delta,
\]
and the second moment of the increment is uniformly bounded.

\subsection{Stopped martingale and exit LLN}
Fix small $r$ and let $\sigma_r=\inf\{k:|V_k|\ge r\}$.  Stopping first at $n\wedge\sigma_r$ avoids any circular integrability assumption.  If $D_k=H(V_{(k+1)\wedge\sigma_r})-H(V_{k\wedge\sigma_r})$, then
\[
 H(V_{n\wedge\sigma_r})=H(v)-c_\alpha(n\wedge\sigma_r)+\sum_{k<(n\wedge\sigma_r)}\varepsilon_k+M_{n\wedge\sigma_r},
\]
where $|\varepsilon_k|\le\delta$ and
\[
 \E[M_{n\wedge\sigma_r}^2]\le C\E[n\wedge\sigma_r].
\]
Taking expectations and using $H\ge0$ gives
\[
 \E[n\wedge\sigma_r]\le C_r(1+H(v)).
\]
Monotone convergence yields
\[
 \E_v\sigma_r\le C_r(1+H(v)).
\]
We now spell out the early- and late-exit estimates.  Put $H_0=H(v)$ and fix $\varepsilon>0$.  Choose $\delta>0$ so small that
\[
 \kappa_-:=1-(c_\alpha+\delta)\left(\frac1{c_\alpha}-\varepsilon\right)>0,
 \qquad
 \kappa_+:=(c_\alpha-\delta)\left(\frac1{c_\alpha}+\varepsilon\right)-1>0.
\]
Let
\[
 n_-:=\left\lfloor\left(\frac1{c_\alpha}-\varepsilon\right)H_0\right\rfloor,
 \qquad
 n_+:=\left\lceil\left(\frac1{c_\alpha}+\varepsilon\right)H_0\right\rceil.
\]
Since $|V_{\sigma_r}|\ge r$, we have $H(V_{\sigma_r})\le H(r)$.  On $\{\sigma_r\le n_-\}$, the stopped decomposition therefore implies
\[
 \begin{aligned}
 M_{\sigma_r}
 &=H(V_{\sigma_r})-H_0+c_\alpha\sigma_r-
   \sum_{k<\sigma_r}\varepsilon_k\\
 &\le H(r)-H_0+(c_\alpha+\delta)n_-\\
 &\le H(r)-\kappa_-H_0+O(1).
 \end{aligned}
\]
Thus, for all sufficiently small $|v|$,
\[
 \{\sigma_r\le n_-\}
 \subset
 \left\{|M_{n_-\wedge\sigma_r}|\ge\frac{\kappa_-}{2}H_0\right\}.
\]
Using the martingale second-moment estimate and the bound
$\E[n_-\wedge\sigma_r]\le C_r(1+H_0)$, Chebyshev's inequality yields
\[
 \Pp_v(\sigma_r\le n_-)
 \le
 \frac{C_r(1+H_0)}{H_0^2}
 \longrightarrow0.
\]

For the late exit, on $\{\sigma_r>n_+\}$ we have $n_+\wedge\sigma_r=n_+$ and $H(V_{n_+})\ge0$.  Hence
\[
 \begin{aligned}
 M_{n_+}
 &=H(V_{n_+})-H_0+c_\alpha n_+-
   \sum_{k<n_+}\varepsilon_k\\
 &\ge -H_0+(c_\alpha-\delta)n_+\\
 &\ge \kappa_+H_0+O(1).
 \end{aligned}
\]
Consequently, for all sufficiently small $|v|$,
\[
 \{\sigma_r>n_+\}
 \subset
 \left\{|M_{n_+\wedge\sigma_r}|\ge\frac{\kappa_+}{2}H_0\right\},
\]
and another application of Chebyshev's inequality gives
\[
 \Pp_v(\sigma_r>n_+)
 \le
 \frac{C_r(1+H_0)}{H_0^2}
 \longrightarrow0.
\]
Combining the two estimates proves
\[
 \frac{\sigma_r(v)}{H(v)}\to\frac1{c_\alpha}
\]
in probability.

It remains to pass from the deep exit $\sigma_r$ to $\tau_R$.  Put
$C_{r,R}=\{x:r\le|x|<R\}$.  Since the one-step Gaussian kernel is nondegenerate and the exit probability extends continuously to the compact closure $\{x:r\le|x|\le R\}$,
\[
 p:=\inf_{x\in C_{r,R}}\Pp_x(|V_1|\ge R)>0.
\]
A failed exit attempt may, however, jump back into $(-r,r)$, so the preceding lower bound alone is not a geometric argument.  We control such returns using the already proved estimate for $\sigma_r$.
For $N\ge1$ let $F_N(x)=\E_x(\tau_R\wedge N)$ and
$M_N=\sup_{x\in C_{r,R}}F_N(x)$.  By the first-step decomposition, the strong Markov property at the next return to $C_{r,R}$, and
$\E_y\sigma_r\le C_r(1+H(y))$ for $0<|y|<r$,
\[
 M_N\le1+(1-p)M_N+K_{r,R},
\]
where
\[
 K_{r,R}:=C_r\sup_{x\in C_{r,R}}
 \E_x\!\left[(1+H(V_1))\1_{\{0<|V_1|<r\}}\right]<\infty.
\]
Indeed, for $x\in C_{r,R}$ the one-step transition density satisfies
\[
 p_x(y)\le \frac{1}{\sqrt{2\pi}\,r^\alpha},\qquad y\in\R.
\]
Moreover, since $r<1$, the substitution $s=\log(1/y)$ gives
\[
 \int_0^r H(y)\,dy
 =\int_{\log(1/r)}^\infty \log(1+s)e^{-s}\,ds<\infty.
\]
Therefore
\[
 K_{r,R}\le \frac{C_r}{\sqrt{2\pi}\,r^\alpha}
 \int_{-r}^r(1+H(y))\,dy<\infty.
\]
Hence
$M_N\le(1+K_{r,R})/p$ uniformly in $N$, and monotone convergence gives
\[
 \sup_{x\in C_{r,R}}\E_x\tau_R<\infty.
\]
Applying the strong Markov property at $\sigma_r$ now yields
\[
 \E_v(\tau_R-\sigma_r)\le C_{r,R}.
\]
Consequently $(\tau_R-\sigma_r)/H(v)\to0$ in probability as $v\to0$, and together with the preceding estimates,
\[
 \frac{\tau_R(v)}{H(v)}\to\frac1{c_\alpha},\qquad
 \E_v\tau_R(v)\le C_R(1+H(v)).
\]

\subsection{Overshoot tightness}
The overshoot admits a direct uniform estimate that does not require a uniform bound on the total Green mass.  Let
\[
 \widehat V=V_{\tau_R},\qquad 0<|v|<R.
\]
On the event $\{\tau_R=k+1\}$, the pre-exit state satisfies $|V_k|<R$.  Hence, for $M>R$,
\[
 \{|\widehat V|>M,\ \tau_R=k+1\}
 \subset
 \left\{|Z_{k+1}|>\frac{M-R}{|V_k|^\alpha},\ \tau_R>k\right\}
 \subset
 \left\{|Z_{k+1}|>\frac{M-R}{R^\alpha},\ \tau_R>k\right\}.
\]
This bound alone cannot be summed over all $k$, because $\E_v\tau_R$ diverges as $v\to0$.  We therefore compare a large overshoot with an ordinary exit at the same step.  For $|u|<R$ put
\[
 p_M(u)=\Pp(|u+|u|^\alpha Z|>M),\qquad
 p_R(u)=\Pp(|u+|u|^\alpha Z|\ge R).
\]
For $M>2R$, Gaussian tail estimates imply
\begin{equation}\label{eq:overshoot-ratio}
 \sup_{|u|<R}\frac{p_M(u)}{p_R(u)}\longrightarrow0,
 \qquad M\to\infty.
\end{equation}
To verify this uniformly, fix once and for all $r\in(0,R/2)$.  On the compact collar $r\le|u|<R$, the denominator $p_R(u)$ is bounded away from zero, whereas $p_M(u)\to0$ uniformly as $M\to\infty$ by Gaussian tails.  Hence the ratio tends to zero uniformly on the collar.  On the inner region $0<|u|<r$, set
\[
  a_M(u)=\frac{M-|u|}{|u|^\alpha},\qquad
  b_R(u)=\frac{R+|u|}{|u|^\alpha}.
\]
Write $\overline\Phi$ for the standard normal upper-tail function. The event that the Gaussian increment crosses the boundary on the side opposite to $u$ gives $p_R(u)\ge\overline\Phi(b_R(u))$, whereas $p_M(u)\le2\overline\Phi(a_M(u))$.

Since $M>2R$ and $|u|<r<R/2$, we have $M-|u|>R+|u|$, that is, $a_M(u)>b_R(u)$. Moreover $b_R(u)\ge R/r^\alpha$. The Mills inequalities
\[
  \frac{b\,\phi(b)}{1+b^2}\le\overline\Phi(b),\qquad
  \overline\Phi(a)\le\frac{\phi(a)}{a}
\]
therefore give
\[
  \frac{p_M(u)}{p_R(u)}
  \le 2\,\frac{1+b_R^2}{a_M b_R}\,
      e^{-(a_M^2-b_R^2)/2}
  \le 2\Bigl(1+\frac{r^{2\alpha}}{R^2}\Bigr)
      \exp\Bigl\{-\frac{(M-|u|)^2-(R+|u|)^2}{2|u|^{2\alpha}}\Bigr\}.
\]
For $0<|u|<r$, the exponent is bounded above by
\[
  -\frac{(M-r)^2-(R+r)^2}{2r^{2\alpha}},
\]
which tends to $-\infty$ quadratically in $M$. Thus the ratio tends to zero uniformly also on $0<|u|<r$. Combining the two regions proves \eqref{eq:overshoot-ratio}.

Now let $\mathsf G_R(v,du)$ denote the killed Green measure of the chain before $\tau_R$.  Since exit occurs exactly once,
\[
 1=\Pp_v(\tau_R<\infty)
   =\int_{(-R,R)}\mathsf G_R(v,du)\,p_R(u),
\]
where finiteness of $\tau_R$ follows from the preceding exit-time estimate.  Consequently,
\[
 \begin{aligned}
 \Pp_v(|V_{\tau_R}|>M)
 &=\int_{(-R,R)}\mathsf G_R(v,du)\,p_M(u)\\
 &\le
 \sup_{|u|<R}\frac{p_M(u)}{p_R(u)}
 \int_{(-R,R)}\mathsf G_R(v,du)\,p_R(u)\\
 &=\sup_{|u|<R}\frac{p_M(u)}{p_R(u)}.
 \end{aligned}
\]
By \eqref{eq:overshoot-ratio},
\[
 \lim_{M\to\infty}\sup_{0<|v|<R}
 \Pp_v(|V_{\tau_R}|>M)=0.
\]
This proves Proposition \ref{prop:overshoot} without requiring a uniform bound on the total Green mass.

\section{Discrete occupation estimates}\label{app:occupation}
We prove Theorem \ref{thm:occupation} entirely at the level of the Euler chain.  This point is important because the quadratic variation of the continuous Euler interpolation contains the lagged variable $X_{\eta_h(s)}^h$; a continuous occupation-density formula for $X_s^h$ therefore does not directly estimate the quantity needed here.

\subsection{The discrete Tanaka remainder}
Put
\[
 s_h(x)=|x|^\alpha\sqrt h,
 \qquad
 \mathcal R_h(x,a)=\E[|x+s_h(x)Z-a|-|x-a|].
\]
If $s_h(x)>0$, writing $q=|x-a|/s_h(x)$ and integrating the two Gaussian tails gives
\[
 \mathcal R_h(x,a)=s_h(x)g(q),
 \qquad
 g(q)=2\{\phi(q)-q\overline\Phi(q)\}
      =2\int_q^\infty(z-q)\phi(z)\,dz.
\]
In particular $g$ is continuous and strictly positive on $[0,\infty)$.  Conditional on $\mathcal F_{kh}$,
\[
 \E[|Y_{k+1}^h-a|-|Y_k^h-a|\mid\mathcal F_{kh}]
 =\mathcal R_h(Y_k^h,a).
\]
Consequently, for every integer $N\ge1$,
\begin{equation}\label{eq:B-telescope}
 \E\sum_{k=0}^{N-1}\mathcal R_h(Y_k^h,a)
 =\E|Y_N^h-a|-|x_h-a|.
\end{equation}
The moment estimate of Appendix \ref{app:identification} implies, uniformly for $N\le N_h(T)+1$ and $|a|\le3$,
\begin{equation}\label{eq:B-telescope-bound}
 \E\sum_{k=0}^{N-1}\mathcal R_h(Y_k^h,a)\le C_T.
\end{equation}

\subsection{A deterministic annular inequality}
We now prove \eqref{eq:localTanaka} without suppressing the scale comparison on which it depends.  Suppose
\[
 0<\rho\le1,
 \qquad
 \rho\le|x|<2\rho,
 \qquad
 s=s_h(x)=|x|^\alpha\sqrt h.
\]
Choose $R_0>1$ so large that
\[
 2^\alpha R_0^{-(1-\alpha)}\le\frac14.
\]
We claim that for every $\rho\ge R_0r_h$ and every
$\rho\le|x|<2\rho$,
\begin{equation}\label{eq:B-local-ineq}
 h\le C\int_{\rho/2\le|a|\le3\rho}
 \mathcal R_h(x,a)|a|^{-2\alpha}\,da.
\end{equation}
Indeed, using $\sqrt h=r_h^{1-\alpha}$,
\[
 \frac{s}{\rho}
 =\frac{|x|^\alpha r_h^{1-\alpha}}{\rho}
 \le2^\alpha\left(\frac{r_h}{\rho}\right)^{1-\alpha}
 \le2^\alpha R_0^{-(1-\alpha)}\le\frac14.
\]
Set $I_x=[x-s,x+s]$.  Since $s\le\rho/4$ and
$\rho\le|x|<2\rho$, the interval $I_x$ has the same sign as $x$ and
\[
 \frac{3\rho}{4}\le |a|\le\frac{9\rho}{4}
 \qquad(a\in I_x),
\]
so $I_x\subset\{\rho/2\le|a|\le3\rho\}$.  Moreover
$|x-a|/s\le1$ on $I_x$.  With
\[
 c_0=\min_{0\le q\le1}g(q)>0,
\]
we have $\mathcal R_h(x,a)\ge c_0s$ on $I_x$, and therefore
\begin{align}
 \int_{\rho/2\le|a|\le3\rho}
 \mathcal R_h(x,a)|a|^{-2\alpha}\,da
 &\ge c_0s(2s)(3\rho)^{-2\alpha}\notag\\
 &=c_1|x|^{2\alpha}h\rho^{-2\alpha}\notag\\
 &\ge c_1h.                                      \label{eq:B-case1}
\end{align}
This proves \eqref{eq:B-local-ineq}.  Notice that the restriction
$\rho\ge R_0r_h$ is not cosmetic: when $s_h(x)$ is much larger than
$\rho$, the integral in \eqref{eq:B-local-ineq} is of order
$s_h(x)\rho^{1-2\alpha}$ and need not dominate $h$ uniformly.

\subsection{Annular and mesoscopic occupation}
For $R_0r_h\le\rho\le1$, apply \eqref{eq:B-local-ineq} to $x=Y_k^h$, sum over $k<N_h(T)$, and use Tonelli together with \eqref{eq:B-telescope-bound}.  Since $|a|\le3\rho\le3$,
\begin{align}
 \E\left[h\sum_{k<N_h(T)}\1_{\{\rho\le|Y_k^h|<2\rho\}}\right]
 &\le C_T\int_{\rho/2\le|a|\le3\rho}|a|^{-2\alpha}\,da\notag\\
 &\le C_T\rho^{1-2\alpha}.                       \label{eq:B-annulus-full}
\end{align}
Here the restriction $\alpha<1/2$ enters through the integrability of $|a|^{-2\alpha}$ at the origin.

Fix $R\ge2R_0$.  For $Rr_h<\varepsilon$, choose $J_h$ so that
\[
 2^{-J_h-1}\varepsilon<Rr_h\le2^{-J_h}\varepsilon.
\]
The region $\{Rr_h\le|x|<\varepsilon\}$ is covered by the annuli
\[
 2^{-j-1}\varepsilon\le|x|<2^{-j}\varepsilon,
 \qquad 0\le j\le J_h.
\]
For the smallest of these annuli the lower radius is larger than
$Rr_h/2\ge R_0r_h$, so \eqref{eq:B-annulus-full} applies to every term.  Therefore \eqref{eq:B-annulus-full} and the geometric series give
\begin{align}
 \E\left[h\sum_{k<N_h(T)}\1_{\{Rr_h\le|Y_k^h|<\varepsilon\}}\right]
 &\le C_T\sum_{j=0}^{J_h}(2^{-j}\varepsilon)^{1-2\alpha}\notag\\
 &\le C_T\varepsilon^{1-2\alpha}.                 \label{eq:B-meso}
\end{align}
Indeed,
\[
 \sum_{j=0}^{J_h}2^{-j(1-2\alpha)}
 \le \sum_{j\ge0}2^{-j(1-2\alpha)}
 =\frac{1}{1-2^{-(1-2\alpha)}}<\infty,
\]
since $\alpha<1/2$.

\subsection{Excursions into the microscopic core}
The following estimate is the microscopic counterpart of the annular bound \eqref{eq:B-meso}.  We state it separately because it is the step that controls repeated returns to the degeneracy.

\begin{lemma}[Microscopic core occupation]\label{lem:core-occupation}
For every fixed $R>0$ and $T<\infty$,
\[
 \E\left[h\sum_{k<N_h(T)}\1_{\{|Y_k^h|<Rr_h\}}\right]
 \le C_R(h+\Theta_h)+C_{T,R}\frac h{r_h}+C_{T,R}e^{-c_R/h}.
\]
In particular, if $\Theta_h\to0$, the expected time spent in the microscopic core tends to zero.
\end{lemma}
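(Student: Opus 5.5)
We work entirely with the rescaled chain $V_k=Y_k^h/r_h$. The quantity to bound is $h$ times the expected number of indices $k<N:=N_h(T)$ with $|V_k|<R$. We split the path at successive entrance and exit times of the core $(-R,R)$. Let $\tau^{(0)}=\tau_R$ be the first exit from the core. For $j\ge1$, let $\sigma^{(j)}$ be the first entrance into $(-R,R)$ after $\tau^{(j-1)}$, and let $\tau^{(j)}$ be the subsequent exit.

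\emph{First excursion.} The contribution is at most $h\,\E_{v_h}\tau_R\le hC_R(1+H(v_h))=C_R(h+\Theta_h)$ by Theorem~\ref{thm:exitLLN}; if $|v_h|\ge R$ the term is absent.

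\emph{Later excursions.} By the strong Markov property at $\sigma^{(j)}$ and the bound $\E_u\tau_R\le C_R(1+H(u))$, each later excursion costs at most
\[
 hC_R\,\E\bigl[1+H(V_{\sigma^{(j)}})\bigr].
\]
We charge this cost to the state $V_{\sigma^{(j)}-1}=u$ with $|u|\ge R$, immediately before entrance. Define
\[
 \psi(u):=\E\bigl[(1+H(u+|u|^\alpha Z))\1_{\{|u+|u|^\alpha Z|<R\}}\bigr].
\]
Then the total cost of later excursions is bounded by
\[
 hC_R\,\E\sum_{k<N}\psi(V_k)\1_{\{|V_k|\ge R\}}.
\]
We bound $\psi$ in two regimes.

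\emph{Collar} $R\le|u|\le K$. The transition density of $u+|u|^\alpha Z$ is bounded by $(2\pi)^{-1/2}R^{-\alpha}$, and $H$ is integrable near $0$ (as in the computation of $K_{r,R}$ in Appendix~\ref{app:exit}). Hence $\psi(u)\le C_{R,K}$. To control the number of collar visits we do not use the annulus estimate with a $\rho^{1-2\alpha}$ weight. Instead we use the telescoping identity \eqref{eq:B-telescope} with $a=0$ and $a=\pm$ levels of order $r_h$. For $|x|\le Kr_h$ one has $s_h(x)\asymp r_h$, and $\mathcal R_h(x,a)\ge c\,r_h$ for $a$ in a window of width $r_h$ around $x$. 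Integrating over $a$ in $[-2Kr_h,2Kr_h]$ gives
\[
 c\,r_h^2\,\#\{k<N:R\le|V_k|\le K\}\le \int \E\sum_k\mathcal R_h(Y_k,a)\,da\le C_T\,r_h.
\]
So the expected number of collar visits is at most $C_{T,R}/r_h$, and this block contributes $C_{T,R}\,h/r_h$.

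\emph{Far field} $|u|>K$. Returning to $(-R,R)$ requires $|Z|\ge (|u|-R)/|u|^\alpha\ge c|u|^{1-\alpha}$. Combined with the moment bounds of $1+H$ under the (now bounded) density, this gives $\psi(u)\le C e^{-c|u|^{2(1-\alpha)}}$, which is a Gaussian cost that decays in $|u|$. Summing naively over all $N\sim T/h$ steps gives $hN\sup_{|u|>K}\psi(u)$, which is only small after choosing $K$ large, not $h$-dependent. To obtain the stated $e^{-c_R/h}$ term, we instead split the far field at the macroscopic scale $|Y_k|\ge\delta$, i.e.\ $|u|\ge\delta/r_h$. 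There $\psi\le Ce^{-c(\delta/r_h)^{2(1-\alpha)}}=Ce^{-c\delta^{2(1-\alpha)}/h}$, and multiplying by $N\le T/h$ still leaves $e^{-c_R/h}$. The intermediate range $K\le|u|\le\delta/r_h$ is handled like the collar: the density bound and the window argument at scale $\rho=|u|r_h$ give $\sum_k\psi(V_k)\1\lesssim r_h^{-1}\sum_j e^{-c2^{j(2-2\alpha)}}2^{-j}\lesssim r_h^{-1}$ over dyadic shells. This is absorbed into the $h/r_h$ term.

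The main obstacle is making the charging argument rigorous. The per-excursion bound must be expressed through a Green-measure sum over the pre-entrance state without double counting. This works because each entrance is preceded by exactly one state outside the core, so
\[
 \sum_j\E[f(V_{\sigma^{(j)}})]\le \E\sum_{k<N}\psi_f(V_k)\1_{\{|V_k|\ge R\}}.
\]
The horizon truncation $N$ must also be respected. Capping each excursion at $N$ steps and using $\E(\tau_R\wedge N)\le\E\tau_R$ suffices. The final assertion then follows because $h/r_h=h^{(1-2\alpha)/(2(1-\alpha))}\to0$ and $e^{-c_R/h}\to0$.
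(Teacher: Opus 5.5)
Your proposal is correct and follows essentially the same route as the paper. The initial excursion is bounded by the Green estimate. Later excursions are charged to the pre-entrance state through $\psi\asymp\mathcal E_R$ by conditioning at deterministic times. The collar, the intermediate dyadic shells and the macroscopic region then give the $h/r_h$, $h/r_h$ and $e^{-c/h}$ terms via the discrete Tanaka telescoping identity and Gaussian tails.

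There are two small differences from the paper. For the collar you integrate $\mathcal R_h(x,a)$ over a window of levels $a$, where the paper uses the pointwise comparison $\mathcal E_R(u)\le C\{\mathcal R(u,R)+\mathcal R(u,-R)\}$ at the two levels $a=\pm Rr_h$; both give the same bound. In the intermediate range, the expected number of visits to shell $j$ should carry the factor $2^{j(1-2\alpha)}$ rather than $2^{-j}$, which is harmless given the superexponential Gaussian decay of $\psi$.
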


\begin{proof}
Let $V_k^h=Y_k^h/r_h$ and
\[
 G_R(v)=\E_v\tau_R,
 \qquad
 \tau_R=\inf\{k\ge0:|V_k|\ge R\}.
\]
Appendix \ref{app:exit} gives
\begin{equation}\label{eq:B-Green}
 G_R(v)\le C_R\{1+H(v)\},
 \qquad 0<|v|<R.
\end{equation}
The initial core excursion therefore contributes at most
\begin{equation}\label{eq:B-initial-exc}
 h\1_{\{0<|v_h|<R\}}G_R(v_h)
 \le C_R(h+\Theta_h).
\end{equation}

For later excursions define, for $|u|\ge R$,
\[
 \mathcal E_R(u)=\E_u[G_R(V_1);|V_1|<R].
\]
To see explicitly how this quantity enters, define successive entrance and exit times by
\[
 \sigma_1=\inf\{k\ge0:|V_k^h|<R\},
 \qquad
 \tau_1=\inf\{k\ge\sigma_1:|V_k^h|\ge R\},
\]
and recursively
\[
 \sigma_{m+1}=\inf\{k\ge\tau_m:|V_k^h|<R\},
 \qquad
 \tau_{m+1}=\inf\{k\ge\sigma_{m+1}:|V_k^h|\ge R\}.
\]
(As usual, the infimum of the empty set is $\infty$.)  We do not condition on $\mathcal F_{\sigma_m-1}$, because $\sigma_m-1$ need not be a stopping time.  Instead, charge every later core excursion to the deterministic grid step at which the chain enters the core.  For each deterministic $k$, the Markov property at time $k$ gives
\[
 \E\!\left[
   \1_{\{|V_k^h|\ge R,\ |V_{k+1}^h|<R\}}
   G_R(V_{k+1}^h)\,\middle|\,\mathcal F_k
 \right]
 =\1_{\{|V_k^h|\ge R\}}\mathcal E_R(V_k^h).
\]
Once an entrance occurs, the expected number of subsequent core steps before the next exit is $G_R(V_{k+1}^h)$; truncating that excursion at $N_h(T)$ can only decrease its contribution.  Summing the preceding deterministic-time identity over $k<N_h(T)$ and separating a possible initial core excursion therefore yields
\begin{equation}\label{eq:B-exc-decomp}
 \E\sum_{k<N_h(T)}\1_{\{|V_k^h|<R\}}
 \le
 \1_{\{0<|v_h|<R\}}G_R(v_h)
 +\E\sum_{k<N_h(T)}\1_{\{|V_k^h|\ge R\}}\mathcal E_R(V_k^h).
\end{equation}
Thus no conditioning at a non-stopping time is used in the excursion decomposition.

\subsection{Returns from a bounded collar}
Fix $M>\max\{2R,R_0\}$.  At scale one write
\[
 \mathcal R(u,a)=|u|^\alpha
 g\left(\frac{|u-a|}{|u|^\alpha}\right).
\]
For $R\le|u|\le M$,
\[
 \mathcal E_R(u)=\int_{-R}^{R}G_R(v)p(u,v)\,dv,
\]
where $p(u,v)$ is the Gaussian transition density.  The bound \eqref{eq:B-Green} shows that $G_R$ is Lebesgue integrable on $(-R,R)$, and the family $p(u,\cdot)$ is uniformly bounded there when $R\le|u|\le M$.  Dominated convergence therefore makes $u\mapsto\mathcal E_R(u)$ continuous on the compact collar.

The function
\[
 u\mapsto\mathcal R(u,R)+\mathcal R(u,-R)
\]
is continuous and strictly positive on the compact collar $\{u:R\le|u|\le M\}$.  Consequently
\begin{equation}\label{eq:B-collar-ratio}
 \mathcal E_R(u)
 \le C_{R,M}\{\mathcal R(u,R)+\mathcal R(u,-R)\},
 \qquad R\le|u|\le M.
\end{equation}
Since $r_h^\alpha\sqrt h=r_h$,
\[
 \mathcal R_h(r_hu,\pm Rr_h)=r_h\mathcal R(u,\pm R).
\]
Multiplying \eqref{eq:B-collar-ratio} by $h$, summing over the collar, and using \eqref{eq:B-telescope-bound} with $a=\pm Rr_h$ yields
\begin{equation}\label{eq:B-collar}
 \E\left[h\sum_{k<N_h(T)}
 \1_{\{R\le|V_k^h|\le M\}}\mathcal E_R(V_k^h)\right]
 \le C_{T,R,M}\frac h{r_h}.
\end{equation}

\subsection{Returns from the far field}
If $|u|\ge2R$, entrance into $(-R,R)$ in one step requires a Gaussian displacement of order $|u|$.  Since the one-step standard deviation is $|u|^\alpha$, the Gaussian tail and \eqref{eq:B-Green} give
\begin{equation}\label{eq:B-far-kernel}
 \mathcal E_R(u)
 \le C_R\exp\{-c_R|u|^{2(1-\alpha)}\}.
\end{equation}
For completeness, this follows from
\[
 \mathcal E_R(u)
 =\int_{-R}^{R}G_R(v)
 \frac{1}{\sqrt{2\pi}|u|^\alpha}
 \exp\left\{-\frac{(v-u)^2}{2|u|^{2\alpha}}\right\}dv
\]
and $|v-u|\ge|u|/2$ for $|u|\ge2R$, with the polynomial prefactor absorbed into the exponential after changing the constants.

Fix $0<\delta\le1$.  For the intermediate region $Mr_h<|Y_k^h|<\delta$, let $J_h$ be the largest integer for which $2^{J_h}Mr_h<\delta$ and decompose into the nonempty intersections
\[
 A_j^h=\{2^jMr_h\le|Y_k^h|<\min(2^{j+1}Mr_h,\delta)\},
 \qquad 0\le j\le J_h.
\]
For every such $j$, the annular radius $2^jMr_h$ is at most $\delta\le1$, so the range restriction in \eqref{eq:B-annulus-full} is respected.  On $A_j^h$, \eqref{eq:B-far-kernel} gives
\[
 \mathcal E_R(V_k^h)
 \le C_R e^{-c_R(2^jM)^{2(1-\alpha)}}.
\]
Applying \eqref{eq:B-annulus-full} at radius $2^jMr_h$ and summing,
\begin{align}
 &\E\left[h\sum_{k<N_h(T)}
 \1_{\{Mr_h<|Y_k^h|<\delta\}}\mathcal E_R(V_k^h)\right]\notag\\
 &\quad\le C_T r_h^{1-2\alpha}
 \sum_{j=0}^{J_h}(2^jM)^{1-2\alpha}
 e^{-c_R(2^jM)^{2(1-\alpha)}}\notag\\
 &\quad\le C_{T,R,M}\frac h{r_h},                 \label{eq:B-intermediate}
\end{align}
because $r_h^{1-2\alpha}=h/r_h$ and the displayed series is finite.

Finally, if $|Y_k^h|\ge\delta$, then $|V_k^h|\ge\delta/r_h$ and \eqref{eq:B-far-kernel} yields
\[
 \mathcal E_R(V_k^h)
 \le C_{R,\delta}\exp\{-c_{R,\delta}/h\},
\]
because $r_h^{-2(1-\alpha)}=h^{-1}$.  Since there are at most $T/h+1$ grid points,
\begin{equation}\label{eq:B-macro}
 \E\left[h\sum_{k<N_h(T)}
 \1_{\{|Y_k^h|\ge\delta\}}\mathcal E_R(V_k^h)\right]
 \le C_{T,R,\delta}e^{-c_{R,\delta}/h}.
\end{equation}

Combining \eqref{eq:B-exc-decomp}, \eqref{eq:B-initial-exc}, \eqref{eq:B-collar}, \eqref{eq:B-intermediate}, and \eqref{eq:B-macro}, and absorbing fixed $M,\delta$ into the constants, gives
\begin{equation}\label{eq:B-core-full}
 \E\left[h\sum_{k<N_h(T)}\1_{\{|Y_k^h|<Rr_h\}}\right]
 \le C_R(h+\Theta_h)+C_{T,R}\frac h{r_h}+C_{T,R}e^{-c_R/h}.
\end{equation}
This proves Lemma \ref{lem:core-occupation}.
\end{proof}

\subsection{From grid occupation to continuous occupation}
Equations \eqref{eq:B-meso} and \eqref{eq:B-core-full} imply, in the subcritical regime $\Theta_h\to0$,
\[
 \limsup_{h\downarrow0}
 \E\left[h\sum_{k<N_h(T)}\1_{\{|Y_k^h|<\varepsilon\}}\right]
 \le C_T\varepsilon^{1-2\alpha}.
\]
Let
\[
 D_h(T)=\sup_{t\le T}|X_t^h-X_{\eta_h(t)}^h|.
\]
For every $\delta>0$,
\[
 \{|X_t^h|<\varepsilon\}\cap\{D_h(T)\le\delta\}
 \subset\{|X_{\eta_h(t)}^h|<\varepsilon+\delta\}.
\]
Therefore
\begin{align*}
 \int_0^T\1_{\{|X_t^h|<\varepsilon\}}dt
 &\le h\sum_{k=0}^{N_h(T)}
 \1_{\{|Y_k^h|<\varepsilon+\delta\}}
 +T\1_{\{D_h(T)>\delta\}}+h.
\end{align*}
Appendix \ref{app:identification} proves $D_h(T)\to0$ in probability.  Taking expectations, then $\limsup_{h\downarrow0}$, and finally $\delta\downarrow0$ gives
\[
 \limsup_{h\downarrow0}
 \E\int_0^T\1_{\{|X_t^h|<\varepsilon\}}dt
 \le C_T\varepsilon^{1-2\alpha},
\]
which is Theorem \ref{thm:occupation}.

\section{Tightness and identification of the non-sticky limit}\label{app:identification}

\subsection{Moment and mesh estimates}
For every $p\ge2$ and $T<\infty$, BDG, H\"older's inequality, and
$|x|^{\alpha p}\le1+|x|^p$ give
\[
 \begin{aligned}
 \E\sup_{u\le t}|X_u^h|^p
 &\le C_p|x_h|^p
 +C_p\E\left(\int_0^t|X_{\eta_h(s)}^h|^{2\alpha}ds\right)^{p/2}\\
 &\le C_{p,T}
 +C_{p,T}\int_0^t\E\sup_{r\le s}|X_r^h|^pds.
 \end{aligned}
\]
For each fixed $h>0$ and $T<\infty$, the left-hand side is finite; this
follows, for instance, by induction over the finitely many Euler steps on
$[0,T]$.  Since $x_h\to0$, the initializations are bounded for all
sufficiently small $h$.  Hence Gronwall's lemma applies and yields, after
restricting to such $h$,
\[
 \sup_{0<h\le h_0}\E\sup_{t\le T}|X_t^h|^p\le C_{p,T}
\]
for some $h_0>0$.
For $0\le s<t\le T$, BDG and H\"older's inequality give
\[
 \begin{aligned}
 \E|X_t^h-X_s^h|^p
 &\le C_p\E\left(\int_s^t|X_{\eta_h(u)}^h|^{2\alpha}du\right)^{p/2}\\
 &\le C_{p,T}|t-s|^{p/2-1}
       \int_s^t\E\bigl[1+|X_{\eta_h(u)}^h|^p\bigr]du\\
 &\le C_{p,T}|t-s|^{p/2}.
 \end{aligned}
\]
Thus, choosing $p>2$, the Kolmogorov--Chentsov criterion gives tightness of
$\{X^h\}$ in $C([0,T])$.  This also supplies the increment estimate quoted
in Section~6 rather than leaving tightness as a separate implicit step.

On $[kh,(k+1)h)$,
\[
 X_t^h-X_{kh}^h=|Y_k^h|^\alpha(W_t-W_{kh}).
\]
Thus, with $D_h(T)=\sup_{t\le T}|X_t^h-X_{\eta_h(t)}^h|$,
\[
 \begin{aligned}
 \E[D_h(T)^p]
 &\le \sum_{k\le N_h(T)}
 \E\left[|Y_k^h|^{\alpha p}
 \sup_{kh\le t\le(k+1)h}|W_t-W_{kh}|^p\right]\\
 &\le C_ph^{p/2}\sum_{k\le N_h(T)}\E|Y_k^h|^{\alpha p}
 \le C_{p,T}h^{p/2-1}.
 \end{aligned}
\]
Choose any $p>2$.  Since $2\alpha<p$,
\[
 \E[D_h(T)^{2\alpha}]
 \le(\E[D_h(T)^p])^{2\alpha/p}\to0.
\]
Because $0<2\alpha<1$,
\[
 \big||x|^{2\alpha}-|y|^{2\alpha}\big|\le|x-y|^{2\alpha},
\]
and hence
\begin{equation}\label{eq:lagremoval-app}
 \E\int_0^T
 \big||X_{\eta_h(s)}^h|^{2\alpha}-|X_s^h|^{2\alpha}\big|ds
 \le T\E[D_h(T)^{2\alpha}]\to0.
\end{equation}

\subsection{Passage to the martingale problem}
Let $h_n\downarrow0$ and suppose $X^{h_n}\Rightarrow X$ in $C([0,T])$.  For
$f\in C_c^2(\R)$,
\[
 M_t^{h,f}=f(X_t^h)-f(x_h)
 -\frac12\int_0^tf''(X_s^h)|X_{\eta_h(s)}^h|^{2\alpha}ds
\]
is a martingale.  Fix
$0\le t_1<\cdots<t_m\le s<t\le T$ and a bounded continuous
$\Phi:\R^m\to\R$.  Then
\begin{equation}\label{eq:cylinder-app}
 \E\left[\Phi(X_{t_1}^h,\ldots,X_{t_m}^h)
 (M_t^{h,f}-M_s^{h,f})\right]=0.
\end{equation}
By \eqref{eq:lagremoval-app}, replacing the lagged coefficient in
\eqref{eq:cylinder-app} by $|X_u^h|^{2\alpha}$ changes its left-hand side by a quantity tending to zero.  Along a Skorokhod representation of the convergent subsequence, $X^{h_n}\to X$ uniformly almost surely.  Therefore
\[
 \int_s^t f''(X_u^{h_n})|X_u^{h_n}|^{2\alpha}du
 \longrightarrow
 \int_s^t f''(X_u)|X_u|^{2\alpha}du
\]
almost surely.  The moment bound above supplies uniform integrability, so passage to the limit in \eqref{eq:cylinder-app} gives
\[
 \E\left[\Phi(X_{t_1},\ldots,X_{t_m})(M_t^f-M_s^f)\right]=0,
\]
where
\[
 M_t^f=f(X_t)-f(0)-\frac12\int_0^tf''(X_u)|X_u|^{2\alpha}du.
\]
Hence $M^f$ is a martingale for every $f\in C_c^2(\R)$.  Localizing with test functions equal to $x$ and $x^2$ on successively larger compact intervals shows that $X$ is a continuous local martingale with
\[
 \langle X\rangle_t=\int_0^t|X_s|^{2\alpha}ds.
\]
After an enlargement of the probability space if necessary, the usual martingale representation therefore gives a Brownian motion $W$ such that
\[
 X_t=\int_0^t|X_s|^\alpha dW_s.
\]

\subsection{Transfer of the occupation estimate}
In the subcritical regime, choose a continuous $\psi_\varepsilon:\R\to[0,1]$ satisfying
\[
 \1_{\{|x|\le\varepsilon/2\}}\le\psi_\varepsilon(x)
 \le\1_{\{|x|<\varepsilon\}}.
\]
The map $f\mapsto\int_0^T\psi_\varepsilon(f(t))dt$ is bounded and continuous on $C([0,T])$.  Theorem \ref{thm:occupation} therefore gives
\[
 \E\int_0^T\psi_\varepsilon(X_t)dt
 =\lim_{n\to\infty}\E\int_0^T\psi_\varepsilon(X_t^{h_n})dt
 \le C_T\varepsilon^{1-2\alpha}.
\]
Since $\1_{\{X_t=0\}}\le\psi_\varepsilon(X_t)$, letting $\varepsilon\downarrow0$ yields
\begin{equation}\label{eq:nonsticky-limit-app}
 \int_0^T\1_{\{X_t=0\}}dt=0\qquad\text{a.s.}
\end{equation}

\subsection{Brownian time change}
Let $B$ be Brownian motion and define
\[
 A_u=\int_0^u\1_{\{B_s\ne0\}}|B_s|^{-2\alpha}ds.
\]
By the Brownian occupation-density formula,
\[
 A_u=\int_\R |x|^{-2\alpha}L_u^x(B)dx.
\]
Because $2\alpha<1$, the singularity $|x|^{-2\alpha}$ is locally integrable; since $x\mapsto L_u^x(B)$ is continuous with compact support for fixed $u$, $A_u<\infty$ almost surely for every finite $u$.  Moreover $A$ is continuous and strictly increasing, because Brownian motion spends zero Lebesgue time at a single point.

We also need $A_u\to\infty$.  By recurrence of one-dimensional Brownian motion, its occupation time of every nonempty bounded interval is infinite almost surely; equivalently, by the occupation-density formula,
\[
 \int_0^\infty \1_{\{|B_s|\le1\}}\,ds
 =\int_{-1}^{1}L_\infty^x(B)\,dx=\infty
 \qquad\text{a.s.}
\]
(One may equally deduce this from the standard divergence of Brownian local time at recurrent points.)  Since $|B_s|^{-2\alpha}\ge1$ on $\{0<|B_s|\le1\}$ and the Brownian zero set has Lebesgue measure zero,
\[
 A_u\ge \int_0^u\1_{\{0<|B_s|\le1\}}\,ds,
\]
so
\begin{equation}\label{eq:Ainfty-app}
 A_u\longrightarrow\infty\qquad\text{a.s.}
\end{equation}

Let
\[
 T_t=\inf\{u\ge0:A_u>t\},\qquad X_t^{\rm tc}=B_{T_t}.
\]
For a nonnegative Borel function $g$, the change of variables $t=A_u$ gives
\[
 \int_0^t g(T_s)ds
 =\int_0^{T_t}g(u)dA_u
 =\int_0^{T_t}g(u)\1_{\{B_u\ne0\}}|B_u|^{-2\alpha}du.
\]
Taking $g(u)=|B_u|^{2\alpha}$, with value zero at $B_u=0$, yields
\begin{equation}\label{eq:inverseclock-app}
 T_t=\int_0^t|B_{T_s}|^{2\alpha}ds.
\end{equation}
Thus $X^{\rm tc}$ is a continuous local martingale with quadratic variation $T_t$, and hence is a weak solution of the SDE.  The same change of variables gives
\[
 \int_0^t\1_{\{X_s^{\rm tc}=0\}}ds
 =\int_0^{T_t}\1_{\{B_u=0\}}dA_u=0,
\]
so this solution is non-sticky.

\subsection{Uniqueness of the non-sticky law}
Conversely, let $X$ be any non-sticky weak solution and put
\[
 Q_t=\int_0^t|X_s|^{2\alpha}ds.
\]
By the non-sticky assumption, $Q$ is strictly increasing: if $Q_t=Q_s$ for $s<t$, then $X_u=0$ for Lebesgue-a.e. $u\in[s,t]$, contradicting non-stickiness.  The Dambis--Dubins--Schwarz theorem \citep{Dambis1965,DubinsSchwarz1965} gives a Brownian motion $B$ such that
\[
 X_t=B_{Q_t}
\]
on the stochastic interval $[0,Q_\infty)$.

We first prove $Q_\infty=\infty$.  Suppose instead that $Q_\infty=q<\infty$.  Extend $B$ continuously to time $q$ (and beyond, if necessary).  Since $Q_t\uparrow q$ and Brownian paths are continuous, $X_t=B_{Q_t}\to B_q$.  If $B_q\ne0$, there are $c>0$ and $t_0$ such that $|X_t|\ge c$ for $t\ge t_0$, and consequently
\[
 Q_\infty-Q_{t_0}=\int_{t_0}^\infty|X_s|^{2\alpha}ds=\infty,
\]
a contradiction.  Hence $B_q=0$.

For $u<q$, let
\[
 S_u=\inf\{t\ge0:Q_t>u\}.
\]
Since $Q$ is continuous and strictly increasing, $S_u<\infty$ and $S_u\uparrow\infty$ as $u\uparrow q$.  Applying the same change-of-variables calculation as above to $Q$ and using $X_{S_u}=B_u$ gives
\[
 S_u=\int_0^u\1_{\{B_s\ne0\}}|B_s|^{-2\alpha}ds=A_u,
 \qquad u<q.
\]
But $A_q<\infty$ by local integrability, whereas $S_u\uparrow\infty$ as $u\uparrow q$, a contradiction.  Therefore
\[
 Q_\infty=\infty\qquad\text{a.s.}
\]
Since $Q_\infty=\infty$, its inverse $S_u=\inf\{t\ge0:Q_t>u\}$ is defined for every $u\ge0$.  The preceding identity gives $S_u=A_u$.  Taking inverses, $Q_t=T_t$ for every $t\ge0$, where $T$ is the inverse of $A$, and hence
\[
 X_t=B_{T_t}.
\]
Thus the Brownian time-change construction uniquely determines the law of a non-sticky weak solution.  Together with the martingale-problem identification and the occupation estimate, this completes the proof of Theorem \ref{thm:subcritical}.

\section*{Acknowledgments}
The author used OpenAI ChatGPT (GPT-5.6 Sol, 2026) as an assistive tool for language editing, structural suggestions, literature searches, numerical-code review, exploration of alternative formulations and proof strategies, and critical examination of mathematical arguments. All mathematical arguments, numerical results, and references were independently evaluated by the author; all substantive revisions and decisions regarding inclusion were made by the author. The author assumes full responsibility for the originality, correctness, and integrity of the manuscript.

\par\medskip
\noindent The views expressed in this article are solely those of the author and do not necessarily reflect those of Sumitomo Mitsui Banking Corporation.


\begin{thebibliography}{99}
\bibitem[Antonov et~al.(2015)Antonov, Konikov and Spector]{AntonovKonikovSpector2015}
Antonov, A., Konikov, M. and Spector, M. (2015). The free boundary SABR: natural extension to negative rates. \emph{Risk}, September, 68--73.

\bibitem[Bafico and Baldi(1982)]{BaficoBaldi1982}
Bafico, R. and Baldi, P. (1982). Small random perturbations of Peano phenomena. \emph{Stochastics} \textbf{6}, 279--292. doi:10.1080/17442508208833208.

\bibitem[Bass(2014)]{Bass2014}
Bass, R.F. (2014). A stochastic differential equation with a sticky point. \emph{Electron. J. Probab.} \textbf{19}, no. 32, 1--22. doi:10.1214/EJP.v19-2350.

\bibitem[Bass et~al.(2007)Bass, Burdzy and Chen]{BassBurdzyChen2007}
Bass, R.F., Burdzy, K. and Chen, Z.-Q. (2007). Pathwise uniqueness for a degenerate stochastic differential equation. \emph{Ann. Probab.} \textbf{35}, 2385--2418. doi:10.1214/009117907000000033.

\bibitem[Bou-Rabee and Holmes-Cerfon(2020)]{BouRabeeHolmesCerfon2020}
Bou-Rabee, N. and Holmes-Cerfon, M.C. (2020). Sticky Brownian motion and its numerical solution. \emph{SIAM Rev.} \textbf{62}, 164--195. doi:10.1137/19M1268446.

\bibitem[Dambis(1965)]{Dambis1965}
Dambis, K.E. (1965). On the decomposition of continuous submartingales. \emph{Theory Probab. Appl.} \textbf{10}, 401--410. doi:10.1137/1110048.

\bibitem[Delarue and Flandoli(2014)]{DelarueFlandoli2014}
Delarue, F. and Flandoli, F. (2014). The transition point in the zero noise limit for a 1D Peano example. \emph{Discrete Contin. Dyn. Syst.} \textbf{34}, 4071--4083. doi:10.3934/dcds.2014.34.4071.

\bibitem[Dubins and Schwarz(1965)]{DubinsSchwarz1965}
Dubins, L.E. and Schwarz, G. (1965). On continuous martingales. \emph{Proc. Natl. Acad. Sci. USA} \textbf{53}, 913--916. doi:10.1073/pnas.53.5.913.

\bibitem[Engelbert and Peskir(2014)]{EngelbertPeskir2014}
Engelbert, H.-J. and Peskir, G. (2014). Stochastic differential equations for sticky Brownian motion. \emph{Stochastics} \textbf{86}, 993--1021. doi:10.1080/17442508.2014.899600.

\bibitem[Engelbert and Schmidt(1985)]{EngelbertSchmidt1985}
Engelbert, H.-J. and Schmidt, W. (1985). On solutions of one-dimensional stochastic differential equations without drift. \emph{Z. Wahrsch. Verw. Gebiete} \textbf{68}, 287--314. doi:10.1007/BF00532642.

\bibitem[Gy\"ongy and R\'asonyi(2011)]{GyongyRasonyi2011}
Gy\"ongy, I. and R\'asonyi, M. (2011). A note on Euler approximations for SDEs with H\"older continuous diffusion coefficients. \emph{Stochastic Process. Appl.} \textbf{121}, 2189--2200. doi:10.1016/j.spa.2011.06.008.

\bibitem[Peskir(2022)]{Peskir2022}
Peskir, G. (2022). Sticky Bessel diffusions. \emph{Stochastic Process. Appl.} \textbf{150}, 1015--1036. doi:10.1016/j.spa.2020.12.009.

\bibitem[Ram\'irez and Uribe Bravo(2024)]{RamirezUribe2024}
Ram\'irez, M. and Uribe Bravo, G. (2024). The sticky L\'evy process as a solution to a time change equation. \emph{J. Math. Anal. Appl.} \textbf{530}, 127742. doi:10.1016/j.jmaa.2023.127742.

\bibitem[Sharma(2025)]{Sharma2025}
Sharma, A. (2025). Weak approximation of stochastic differential equations with sticky boundary conditions. arXiv:2508.06487.

\bibitem[Suganuma and Yamada(2017)]{SuganumaYamada2017}
Suganuma, K. and Yamada, T. (2017). Forward-rate models allowing for negative interest rates and market expectations of future interest rates. IMES Discussion Paper Series 2017-J-18, Institute for Monetary and Economic Studies, Bank of Japan. [In Japanese.]

\bibitem[Taguchi and Tanaka(2019)]{TaguchiTanaka2019}
Taguchi, D. and Tanaka, A. (2019). On the Euler--Maruyama scheme for degenerate stochastic differential equations with non-sticky condition. In: \emph{S\'eminaire de Probabilit\'es L}, Lecture Notes in Mathematics 2252, 165--185. Springer. doi:10.1007/978-3-030-28535-7\_9.

\bibitem[Yan(2002)]{Yan2002}
Yan, L. (2002). The Euler scheme with irregular coefficients. \emph{Ann. Probab.} \textbf{30}, 1172--1194. doi:10.1214/aop/1029867124.
\end{thebibliography}
\end{document}